\newtheorem{theorem}{Theorem}[section]
\newtheorem{lemma}[theorem]{Lemma}
\newtheorem{proposition}[theorem]{Proposition}
\newtheorem{corollary}[theorem]{Corollary}
\theoremstyle{definition}
\newtheorem{remark}[theorem]{Remark}
\numberwithin{equation}{section}
\newcommand{\veps}{\varepsilon}
\newcommand{\CC}{\mathbb C}
\newcommand{\RR}{\mathbb R}
\newcommand{\DD}{\mathbb D}
\newcommand{\TT}{\mathbb T}
\newcommand{\Bd}{\mathbb B_d}
\newcommand{\Sd}{\mathbb S_d}
\newcommand{\mub}{\mu_{\mathbb B}}
\newcommand{\mus}{\mu_{\mathbb S}}
\begin{document}
\setcounter{page}{1}

\title[Essential norms]
{Essential norms of composition operators and multipliers acting between different Hardy spaces}
\date{\today}

\author[F. Bayart]{Frédéric Bayart}

\address{Laboratoire de Math\'ematiques Blaise Pascal UMR 6620 CNRS, Universit\'e Clermont Auvergne, Campus universitaire des C\'ezeaux, 3 place Vasarely, 63178 Aubi\`ere Cedex, France.}
\email{frederic.bayart@uca.fr}


\subjclass[2010]{}

\keywords{}

\begin{abstract}
We compute the essential norm of inclusion operators, composition operators 
and multipliers acting from a closed subspace of some $L^p$-space into a subspace of some $L^q$-space, with $p>q.$
\end{abstract}
\maketitle

\section{Introduction}

\subsection{General context} Let $(\Omega_1,\mathcal E,\mu)$ and $(\Omega_2,\mathcal F,\nu)$
be two measure spaces, let $p,q\in[1,+\infty]$, let $X_p,Y_q$ be two closed subspaces of $L^p(\Omega_1)$ (resp. $L^q(\Omega_2)$)
and let $T_\varphi:X_p \to Y_q$ be a linear map depending on some ``symbol'' $\varphi$. Our aim in this paper is to obtain
estimates of the essential norm of $T_\varphi$ by quantities depending only on the symbol $\varphi.$ To emphasize that we work
with different values of $p$ and $q$, we will denote $\|T\|_{p\to q}$ (resp. $\|T\|_{e,p\to q}$) the norm (resp. the essential norm)
of any operator $T:X_p\to Y_q$. In particular, we will be concerned with composition operators and multiplication operators.

\subsection{Composition operators}
Let $\varphi$ be a holomorphic selfmap of the unit disc $\mathbb D$ and let $C_\varphi(f)=f\circ\varphi$ be the associated composition operator.
 Let also $p,q\in[1,+\infty]$. The characterization of compact composition operators $C_\varphi:H^p(\mathbb D)\to H^q(\mathbb D)$
and the computation of the essential norm $\|C_\varphi\|_{e,p\to q}$ have been investigated by many mathematicians
(see for instance \cite{Sha87}, \cite{Choe92}, \cite{GorMacCluer04} or \cite{CZ07} and the references therein).
In particular, the case $p\leq q$ is fairly well understood and $\|C_\varphi\|_{e,p\to q}$ is estimated
by quantities depending only on $\varphi$ and involving either Nevanlinna counting functions
or Carleson measures or integrals.

The case $p>q\geq 1$ remains more mysterious. H. Jarchow and T. Gobeler have shown (\cite{Jar98,Goe01})
that $C_\varphi$ is compact iff $E=E_\varphi=\{\xi\in\mathbb T:\ |\varphi^*(\xi)|=1\}$ has (Lebesgue) measure $0$,
where $\varphi^*$ denotes the radial limit function of $\varphi$. Upper and lower estimates for $\|C_\varphi\|_{e,p\to q}$
have been obtained in \cite{GorMacCluer04} when $q>1$ and generalized to $q=1$ in \cite{Dem11} but they do not coincide.

Our first main result in this paper is to get an estimation for $\|C_\varphi\|_{e,p\to q}$ in the spirit of what has been done in the
case $p\leq q$. Thus assume that $\sigma(E)>0$ where $\sigma$ is the normalized Lebesgue measure on the circle.
The map $\varphi^*_{|E}:E\to\varphi^*(E)$ is a non singular transformation from $(E,\sigma)$ into $(\varphi^*(E),\sigma)$
meaning that it does not collapse a set of positive measure into a set of measure $0$. We shall denote by $F_\varphi$ the 
Radon-Nikodym derivative of $\sigma_{|E}\circ(\varphi^*)^{-1}_{|\varphi^*(E)}$ with respect to $\sigma_{|\varphi^*(E)}$. It turns out that $\|C_\varphi\|_{e,p\to q}$ 
is comparable to $\|F_\varphi\|_s^{1/q}$ with $s=p/(p-q)$.

\begin{theorem}\label{thm:compointro}
 Let $1\leq q<p$, let $\varphi:\mathbb D\to\mathbb D$ be holomorphic with $\sigma(E_\varphi)>0$. Set $s=p/(p-q)$. Then
 $$\|F_\varphi\|_{L^s}^{1/q}\leq \|C_\varphi\|_{e,p\to q}\leq 2\|F_\varphi\|_{L^s}^{1/q}.$$
 Moreover, when $q=2,$ $\|C_\varphi\|_{e,p\to 2}=\|F_\varphi\|_s^{1/2}.$
\end{theorem}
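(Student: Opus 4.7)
The plan is to prove the two inequalities separately, and then sharpen the upper bound in the case $q=2$. Two basic ingredients will be used throughout: the change-of-variables identity $\int_E h\circ\varphi^*\,d\sigma = \int_{\varphi^*(E)} h\,F_\varphi\,d\sigma$, which is the defining property of $F_\varphi$, and the automatic boundedness of $C_\varphi\colon H^p\to H^q$ (a consequence of $H^p\hookrightarrow H^q$ and Littlewood's theorem on $H^p$).

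For the lower bound I shall test $C_\varphi$ on the weakly null sequence $h_k = z^k f_n$, where $f_n$ is chosen to almost saturate H\"older's inequality $\int_{\varphi^*(E)} |f|^q F_\varphi\,d\sigma \le \|f\|_p^q \|F_\varphi\|_s$. Equality would require $|f|^p\propto F_\varphi^s$; since $F_\varphi$ may vanish on a set of positive measure and $\log F_\varphi$ need not be integrable, I double-truncate: set $G_n=\max(\min(F_\varphi^s,n),1/n)$ and let $f_n\in H^p$ be an outer function with $|f_n^*|^p = G_n/\|G_n\|_1$, so that $\|f_n\|_p = 1$. The identity $1/s + q/p = 1$ (equivalently $sq/p = s-1$) is precisely what makes a single truncation of $F_\varphi^s$ suffice, and a short region-by-region computation yields $\int |f_n|^q F_\varphi \to \|F_\varphi\|_s$ as $n\to\infty$. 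Since $z^k f_n\to 0$ weakly in $H^p$ as $k\to\infty$ and $|\varphi^*|^{kq}\mathbf{1}_{E^c}\to 0$ pointwise, dominated convergence yields $\|C_\varphi(z^k f_n)\|_q^q \to \int_E |f_n\circ\varphi^*|^q\,d\sigma = \int |f_n|^q F_\varphi\,d\sigma$, whence $\|C_\varphi\|_{e,p\to q}^q \ge \|F_\varphi\|_s$.

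For the upper bound I use $C_{r\varphi}$, $0<r<1$, as a compact approximant; compactness follows from $r\varphi(\mathbb D)\subset r\overline{\mathbb D}$ and the fact that weakly null sequences in $H^p$ are uniformly convergent on compact subsets of $\mathbb D$. Setting $g=f-f_r$ with $f_r(z)=f(rz)$ and splitting along $E$ and $E^c$,
\[
\|(C_\varphi - C_{r\varphi})f\|_q^q = \int_E |g\circ\varphi^*|^q\,d\sigma + \int_{E^c} |g\circ\varphi^*|^q\,d\sigma,
\]
the $E$-integral equals $\int |g|^q F_\varphi \le \|F_\varphi\|_s \|g\|_p^q \le 2^q \|F_\varphi\|_s \|f\|_p^q$ (the factor $2$ arising from $\|f-f_r\|_p\le 2\|f\|_p$). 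To control the $E^c$-integral \emph{uniformly} in $\|f\|_p\le 1$, I partition $E^c$ into a compact core $\{|\varphi^*|\le 1-\delta\}$ and a thin annulus $\{1-\delta<|\varphi^*|<1\}$: on the core, Cauchy's formula and the pointwise $H^p$-growth estimate yield $|f(w)-f(rw)|\le C_\delta(1-r)\|f\|_p$; on the annulus, H\"older with conjugates $(p/q,s)$ together with the boundedness of $C_\varphi$ on $H^p$ gives $\sigma(\{1-\delta<|\varphi^*|<1\})^{1/s}\cdot O(\|f\|_p^q)$, which vanishes as $\delta\to 0$ since these annuli have empty intersection. Choosing $\delta$ small and then $r$ close to $1$ yields $\|C_\varphi\|_{e,p\to q}\le 2\|F_\varphi\|_s^{1/q}$.

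For equality when $q=2$, I replace $C_{r\varphi}$ by $PK_n$, where $K_n f(\xi)=\mathbf{1}_{\{|\varphi^*(\xi)|\le 1-1/n\}}f(\varphi^*(\xi))$ is compact $H^p\to L^2(\mathbb T)$ by the uniform-on-compacta argument, and $P\colon L^2\to H^2$ is the \emph{contractive} orthogonal Riesz projection. Since $C_\varphi f = P(C_\varphi f)$ in $L^2$, one has $(C_\varphi - PK_n)f = P((C_\varphi - K_n)f)$, hence $\|(C_\varphi - PK_n)f\|_{H^2}^2 \le \int_E |f\circ\varphi^*|^2\,d\sigma + \int_{B_n}|f\circ\varphi^*|^2\,d\sigma$ where $B_n=\{1-1/n<|\varphi^*|<1\}$; the $E$-term is bounded by $\|F_\varphi\|_s\|f\|_p^2$ and the $B_n$-term is $o(1)\|f\|_p^2$, giving $\|C_\varphi\|_{e,p\to 2}\le\|F_\varphi\|_s^{1/2}$. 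The main technical difficulties will be (i) the double-truncation construction of $f_n$, where the identity $sq/p=s-1$ has to be exploited precisely in order to control both $\log|f_n^*|$ and the H\"older defect by the single parameter $n$, and (ii) the \emph{uniform} (rather than pointwise) vanishing of the $E^c$-integral in the upper bound, which forces the compact-core-plus-annulus split; the factor $2$ persists in the general case because the split $C_\varphi - C_{r\varphi}$ costs $\|f - f_r\|_p\le 2\|f\|_p$ on $E$, and can only be removed via a norm-one projection, as is available in the Hilbert setting $q=2$.
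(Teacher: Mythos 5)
Your proof is correct, and its skeleton is the paper's: the lower bound via the weakly null sequences $z^k g$ with $g$ norming $F_\varphi$ in the $L^s$--$L^{s^*}$ duality; the upper bound via the dilation approximant (your $C_{r\varphi}$ is exactly $C_\varphi\mathcal Q_r$ with $\mathcal Q_rf=f(r\,\cdot)$, which is the paper's choice, the factor $2$ coming in both cases from $\|f-f_r\|_p\le 2\|f\|_p$); and the removal of that factor for $q=2$ via a norm-one projection. Where you genuinely diverge is in the implementation, and your route is more elementary and self-contained in one variable. For the lower bound the paper quotes the Gorkin--MacCluer inequality $\|C_\varphi\|_{e,p\to q}\ge\|M_EC_\varphi\|_{p\to q}$ together with a lemma realizing $\|F_\varphi\|_s$ as a supremum over $B_{H^p}$ by means of L\o w's inner functions (needed on the ball); your double-truncated outer function with $|f_n^*|^p=G_n/\|G_n\|_1$ does the same job directly on the disc, and the computation $\int|f_n|^qF_\varphi\to\|F_\varphi\|_s$ checks out ($\log G_n$ is bounded, so the outer function exists, and the exponent identity $sq/p=s-1$ closes the dominated-convergence step; note that $F_\varphi\in L^s$, needed for the domination, follows from this very duality and the boundedness of $C_\varphi:H^p\to H^q$). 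For the part of the upper bound supported off $E$, the paper controls $\int_{\{r<|\varphi^*|<1\}}|\mathcal R_rf\circ\varphi^*|^q\,d\sigma$ by Pau's Carleson-measure estimate (admissible maximal function plus H\"older against $\widehat{\mu_r}\in L^s$, with $\|\widehat{\mu_r}\|_s\to0$); you instead use H\"older with exponents $(p/q,s)$, Littlewood's subordination theorem, and $\sigma(\{1-\delta<|\varphi^*|<1\})\to0$, which is simpler and entirely sufficient, buying independence from the inclusion-operator machinery of Section 2 at the (null, for $d=1$) cost of invoking $\|C_\varphi\|_{p\to p}<+\infty$. For $q=2$ the paper transfers the problem to the inclusion operator $J_{\mu_\varphi}$ through the isometries $W_2,R_2$ and the Szeg\"o projection, whereas your direct approximant $PK_n$ with $K_nf=\mathbf 1_{\{|\varphi^*|\le 1-1/n\}}\,f\circ\varphi^*$ reaches the same bound with less apparatus; both arguments rest on the same mechanism, namely $\|P\|=1$ on $L^2$.
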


The proof of this theorem will be given in Section \ref{sec:CO} in the wider context of composition operators 
on the Hardy spaces of the complex unit ball $\mathbb B_d$. It will use general results on inclusion 
operators inspired by \cite{BJ05,Pau16} which will be developed in Section \ref{sec:IO}.

\subsection{Multipliers on Hardy spaces of Dirichlet series}
We turn to our second example, multipliers on Hardy spaces of Dirichlet series. 
The Hardy spaces of Dirichlet series $\mathcal H^p$ were introduced by Hedenmalm, Lindqvist and Seip \cite{HLS}
for $p=2$ and by the author \cite{BAYMONAT} for the remaining cases in the range $p\in[1,+\infty]$. 
A way to define these spaces is to consider first the following norm in the space of
Dirichlet polynomials (i.e. all finite sums $\sum_{n=1}^N a_n n^{-s},\ a_n\in\mathbb C$, $N\in\mathbb N$):
$$\left\|\sum_{n=1}^N a_nn^{-s}\right\|_p^p=\lim_{T\to+\infty}\frac1{2T}\int_{-T}^T \left|\sum_{n=1}^N a_n n^{it}\right|^pdt.$$
The space $\mathcal H^p,$ $1\leq p<+\infty,$ is then defined as the completion of the Dirichlet polynomials
under this norm. Functions in $\mathcal H^p$ are Dirichlet series which converge in the half-plane $\mathbb C_{1/2}$ and are holomorphic there,
where for $a>0,$ $\mathbb C_a=\{s\in\CC:\ \Re e(s)>a\},$ 
We also need to introduce $\mathcal H^\infty$, the space of Dirichlet series that define a bounded holomorphic
function on the half-plane $\mathbb C_0$. It is endowed with the norm $\|D\|_\infty=\sup_{\Re e(s)>0}|D(s)|$.

The multipliers of $\mathcal H^p$ have been characterized in \cite{HLS,BAYMONAT}. A holomorphic self-map $D:\CC_{1/2}\to\CC_{1/2}$ induces a bounded map
$M_D:\mathcal H^p\to\mathcal H^p,$ $f\mapsto Df$ if and only if $D\in\mathcal H^\infty.$ In that case, $\|M_D\|_{p\to p}=\|D\|_\infty.$
Very recently, the multipliers between different Hardy spaces have been studied in \cite{FGS23}. In that paper, it is shown that
\begin{itemize}
 \item there is no bounded multiplier from $\mathcal H^p$ into $\mathcal H^q$ if $1\leq p<q<+\infty$;
 \item for $1\leq q<p<+\infty,$ $D$ induces a bounded map from $\mathcal H^p$ into $\mathcal H^q$ if and only if $D\in\mathcal H^r,$
 with $r=pq/(p-q)$. In that case, $\|M_D\|_{p\to q}=\|D\|_r$ and
 $$\|D\|_q\leq \|M_D\|_{e,p\to q}\leq\|D\|_r;$$
 \item for $p>1,$ $\|M_D\|_{e,p\to p}=\|D\|_\infty$; for $p=1,$
 $$\frac 12\|D\|_\infty\leq \|M_D\|_{e,1\to 1}\leq\|D\|_\infty.$$
\end{itemize}
We fully complete the picture by computing the essential norm in the remaining cases:
\begin{theorem}\label{thm:dirichletseries}
 \begin{enumerate}[(a)]
  \item Let $1\leq q<p$ and $D\in\mathcal H^r$ with $r=pq/(p-q)$. Then $\|M_D\|_{e,p\to q}=\|D\|_r.$
  \item Let $D\in\mathcal H^\infty$. Then $\|M_D\|_{e,1\to 1}=\|D\|_\infty.$
 \end{enumerate}
\end{theorem}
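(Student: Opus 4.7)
The plan is to prove the lower bounds, the upper bounds being immediate: since $\|M_D\|_{e,p\to q}\le\|M_D\|_{p\to q}$, the operator-norm equalities $\|M_D\|_{p\to q}=\|D\|_r$ and $\|M_D\|_{1\to 1}=\|D\|_\infty$ recalled in the introduction give ``$\le$'' in (a) and (b) respectively. Throughout I would work via the Bohr lift, identifying $\mathcal H^p$ with the Hardy space $H^p(\ttinf)$ of the infinite polytorus, so that $M_D$ becomes multiplication by the lift $\tilde D$ of $D$.

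For (a), the crucial observation is that multiplication by $p_k^{-s}$, where $p_k$ denotes the $k$-th prime, corresponds on the Bohr side to multiplication by the coordinate $z_k$, a unimodular character. Fix $\varepsilon>0$. The equality $\|M_D\|_{p\to q}=\|D\|_r$ produces $f\in\mathcal H^p$ with $\|f\|_p\le 1$ and $\|Df\|_q\ge\|D\|_r-\varepsilon$ (I may further reduce $f$ to a Dirichlet polynomial by density of polynomials in $\mathcal H^p$ and boundedness of $M_D$). Setting $f_k(s)=p_k^{-s}f(s)$, the unimodularity of $z_k$ gives $\|f_k\|_p=\|f\|_p$ and $\|Df_k\|_q=\|Df\|_q$. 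The sequence $(f_k)$ is weakly null in $\mathcal H^p$: for any $g\in L^{p'}(\ttinf)$, the integral $\int_{\ttinf}z_k\,\tilde f\,\bar g\,d\sigma$ is the Fourier coefficient of $\tilde f\,\bar g\in L^1(\ttinf)$ at $-e_k$, and tends to $0$ by the Riemann--Lebesgue lemma on $\ttinf$. The standard essential-norm inequality for weak-null bounded sequences then yields $\|M_D\|_{e,p\to q}\ge\limsup_k\|Df_k\|_q\ge\|D\|_r-\varepsilon$, and $\varepsilon\to 0$ concludes (a).

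Part (b) runs the same scheme inside $\mathcal H^1$. The identity $\|M_D\|_{1\to 1}=\|D\|_\infty$ produces $f\in\mathcal H^1$ with $\|f\|_1\le 1$ and $\|Df\|_1\ge\|D\|_\infty-\varepsilon$, and the translates $f_k=p_k^{-s}f$ preserve both norms. The Fourier-coefficient computation above, tested against arbitrary $g\in L^\infty(\ttinf)$, shows $f_k\to 0$ weakly in $L^1$, hence in the closed subspace $\mathcal H^1$. For any compact $K:\mathcal H^1\to\mathcal H^1$, some subsequence $(Kf_{k_j})$ norm-converges to $0$ (its weak limit is forced to $0$ since $f_{k_j}\rightharpoonup 0$), whence $\|M_D-K\|\ge\|Df_{k_j}-Kf_{k_j}\|_1\to\|Df\|_1\ge\|D\|_\infty-\varepsilon$. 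The main obstacle is precisely this weak-null verification in the non-reflexive $\mathcal H^1$, but the uniform pointwise bound $|\tilde f_k|=|\tilde f|$ provides uniform integrability and the Fourier-coefficient decay is exactly a Dunford--Pettis-type criterion, so the obstacle dissolves and both cases proceed in parallel.
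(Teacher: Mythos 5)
Your proof is correct, but it follows a genuinely different route from the paper's, and for part (b) a substantially shorter one. For (a), the paper does not invoke the operator-norm identity to produce a near-extremal $f$; instead it reduces to finitely many variables via the projections $\mathcal P_N$, builds the extremizer by hand (approximating $|F_N|^{q/(p-q)}$ by trigonometric polynomials $Q_n$, multiplying by a monomial to land in $H^p(\TT^N)$), and obtains weak nullity from the sequence $z_1^k P_nQ_n$ together with a weak-star convergence lemma of \cite{FGS23} and reflexivity of $H^p$. For (b), where reflexivity fails, the paper resorts to a delicate construction of a bounded sequence $(R_n)$ peaking on nested sets where $|F|\geq\|F\|_\infty-\veps$, via Fej\'er kernels and Egorov's theorem, and estimates $\|R_n-R_m\|_1$ directly against a compact perturbation. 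Your single mechanism --- translate a fixed near-extremal $f$ by the independent unimodular characters $z_k$ and observe that $z_k\tilde f\bar g$ has Fourier coefficient at $-e_k$ tending to $0$ by the Riemann--Lebesgue lemma on the compact group $\ttinf$ --- gives weak nullity in $L^1(\ttinf)$, hence in $\mathcal H^1$, with no reflexivity needed, and the fact that compact operators send weakly null sequences to norm-null ones holds in every Banach space; so both parts close at once. What you trade away is twofold: you use the equalities $\|M_D\|_{p\to q}=\|D\|_r$ and $\|M_D\|_{1\to1}=\|D\|_\infty$ from \cite{FGS23,HLS,BAYMONAT} as a black box (legitimate, since the paper quotes them), and your sequence $(z_k)_{k\to\infty}$ exploits the infinitely many variables of $\ttinf$, so it does not directly yield the paper's corollary on $H^r(\TT^N)$ for finite $N$ --- though replacing $z_k$ by $z_1^k$ repairs this, since the characters $-ke_1$ also escape every finite subset of the dual group. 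The final sentence of your part (b) about uniform integrability and Dunford--Pettis is superfluous (and not quite the right criterion); the direct test against $L^\infty(\ttinf)$ that you already gave is complete on its own.
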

Our method of proof will use the Bohr lift and harmonic analysis on the polytorus. As a consequence, we will get results
corresponding to Theorem \ref{thm:dirichletseries} for multipliers on Hardy spaces of the polytorus which seem new even for the circle
(see \cite{SZ03,FGS23} for details in this case).

\subsection{Multipliers on Lebesgue spaces}
Our final example deals with multipliers on Lebesgue spaces without any extra structure. 
Let $(\Omega,\mathcal A,\mu)$ be a $\sigma$-finite measure space and let $u:\Omega\to\Omega$ be measurable. 
It is only recently that the essential norm of the multiplier $M_u:f\mapsto uf$, as an operator on $L^p(\mu),$
$p\geq 1,$ has been computed (see \cite{CLR21,Voi22}). We shall do the same when $M_u$ is viewed as an operator
from $L^p(\mu)$ to $L^q(\mu)$ with $1\leq q<p$ (continuity has been characterized in \cite{TaYo99} and is
equivalent to $u\in L^r(\mu)$, $r=pq/(p-q)$ and compactness has been characterized
in \cite{LoLoh20} in the more general context of weighted composition operators).
 In order to describe that result, we recall that the measure space can be
decomposed as a disjoint union $\Omega=\Omega_d\cup\Omega_a,$ where $\Omega_d,\Omega_a\in\mathcal A,$
the restriction $\mu_d$ of $\mu$ to $\Omega_d$ is a diffuse measure and the restriction $\mu_a$ of
$\mu$ to $\Omega_a$ is purely atomic. Namely,
\begin{itemize}
 \item for any measurable subset $A$ of $\Omega_d$ with $\mu_d(A)>0,$ for every $\alpha\in (0,\mu_d(A)),$
 there exists $A'\in\mathcal A$ with $A'\subset A$ and $\mu_d(A')=\alpha.$
 \item $\Omega_a$ is the disjoint union of a sequence $(A_n)$ of atoms (any measurable
 subset of $A_n$ has measure equal to $0$ or $\mu_a(A_n)$).
\end{itemize}
We shall also recall that $(\Omega,\mathcal A,\mu)$ is a separable measure space provided there
exists a sequence $(B_n)\subset\mathcal A$ such that, for any $B\in\mathcal A,$ for any $\veps>0,$
one may find $n\geq 1$ such that $\mu(B\Delta B_n)<\veps.$ Under this assumption,
for any $p\in[1,+\infty),$ $L^p(\mu)$ is separable:  the set of steps functions $\mathbf 1_{B_n}$
spans a dense subspace of $L^p(\mu).$

\begin{theorem}\label{thm:multiplierlebesgue}
 Let $1\leq q<p$ and set $r=pq/(p-q)$. Let $(\Omega,\mathcal A,\mu)$ be a $\sigma$-finite separable
 measure space and let $u\in L^r(\mu)$. Then $\|M_u\|_{e,p\to q}=\|u_{|\Omega_d}\|_r.$
\end{theorem}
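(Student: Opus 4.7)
The plan is to prove the two inequalities $\|M_u\|_{e,p\to q}\leq\|u_{|\Omega_d}\|_r$ and $\|M_u\|_{e,p\to q}\geq\|u_{|\Omega_d}\|_r$ separately, using the decomposition $u=u\mathbf 1_{\Omega_d}+u\mathbf 1_{\Omega_a}$ and the principle that the atomic part produces a compact multiplier while the diffuse part realises the full essential norm.

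For the upper bound, I would first show that $M_{u\mathbf 1_{\Omega_a}}:L^p(\mu)\to L^q(\mu)$ is compact. Since the measure space is $\sigma$-finite and separable, $\Omega_a$ is a countable union of atoms $(A_n)$ on which $u$ is a.e.\ constant equal to some $u_n$. Writing $f\in L^p$ on $\Omega_a$ as $\sum_n c_n\mathbf 1_{A_n}$ with $c_n=f_{|A_n}$, the truncation $T_Nf=\sum_{n\leq N}u_nc_n\mathbf 1_{A_n}$ is finite rank, and Hölder's inequality with conjugate exponents $r/q$ and $p/q$ gives
$$\|(M_{u\mathbf 1_{\Omega_a}}-T_N)f\|_q\leq\left(\sum_{n>N}|u_n|^r\mu(A_n)\right)^{1/r}\|f\|_p,$$
which tends to $0$ as $N\to\infty$ because $u\in L^r(\mu)$. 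Hence $M_{u\mathbf 1_{\Omega_a}}$ is a norm-limit of finite rank operators, and invoking the norm formula of \cite{TaYo99} recalled in the introduction, the triangle inequality yields $\|M_u\|_{e,p\to q}\leq\|M_{u\mathbf 1_{\Omega_d}}\|_{p\to q}=\|u_{|\Omega_d}\|_r$.

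For the lower bound, I would use the classical fact that compact operators send weakly null sequences to norm null sequences: if $(f_n)$ is bounded in $L^p$ and converges weakly to $0$, then for every compact $K:L^p\to L^q$,
$$\|M_u-K\|_{p\to q}\geq\limsup_n\frac{\|M_uf_n\|_q}{\|f_n\|_p}.$$
The Hölder extremal $f_0=\overline{\mathrm{sgn}(u)}|u|^{r/p}\mathbf 1_{\Omega_d}$ satisfies $\|f_0\|_p=\|u_{|\Omega_d}\|_r^{r/p}$ and $\|M_uf_0\|_q=\|u_{|\Omega_d}\|_r^{r/q}$, so the ratio equals $\|u_{|\Omega_d}\|_r$ exactly; the difficulty is that $f_0$ itself does not tend weakly to $0$. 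The idea is to modulate $f_0$ by a sequence of unimodular functions $g_n:\Omega_d\to\mathbb T$ with $\int g_nh\,d\mu\to 0$ for every $h\in L^1(\Omega_d)$. Setting $f_n=g_nf_0$ preserves $|f_n|=|f_0|$ pointwise, hence preserves both $\|f_n\|_p$ and $\|M_uf_n\|_q$, and weak nullity in $L^p(\Omega)$ follows by testing against $\phi\in L^{p'}(\Omega)$, since $f_0\bar\phi\in L^1(\Omega_d)$ by Hölder.

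The main technical obstacle will be producing such unimodular weak null sequences on an arbitrary diffuse, $\sigma$-finite, separable measure space. Two routes should work: either invoke the classical structure theorem identifying $(\Omega_d,\mu_d)$ up to a measure-preserving Borel isomorphism with a (possibly countable) disjoint union of intervals equipped with Lebesgue measure, and then pull back the Riemann--Lebesgue sequence $x\mapsto e^{2\pi inx}$; or work on a simple-function approximation $\sum_k\alpha_k\mathbf 1_{E_k}$ of $f_0$ with finite-measure pieces and construct Rademacher-type sign functions directly by dyadic halvings on each $E_k$ (permitted by diffuseness), verifying weak nullity by a Bessel-orthogonality argument. Taking the simple approximation close enough to $f_0$ in $L^p$ gives $\|M_u\|_{e,p\to q}\geq\|u_{|\Omega_d}\|_r-\veps$ for every $\veps>0$, closing the proof.
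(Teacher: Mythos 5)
Your proposal is correct and follows the same skeleton as the paper's proof: the atomic part of $u$ gives a norm-limit of finite-rank multipliers, hence the upper bound $\|u_{|\Omega_d}\|_r$ by Hölder with exponents $r/q$ and $p/q$, and the lower bound comes from modulating the Hölder extremal $|u|^{r/p}\mathbf 1_{\Omega_d}$ by oscillating unimodular factors so that the resulting sequence is weakly null while its modulus, hence both $\|f_n\|_p$ and $\|M_uf_n\|_q$, is unchanged. The only genuine divergence is in how the oscillating factors are produced. The paper uses separability head-on: for each $n$ it takes the atoms $C_I$ of the algebra generated by the first $n$ sets of a separating sequence $(B_k)$, splits each $C_I$ into two pieces of equal weighted measure (possible by diffuseness), assigns opposite signs there, and verifies weak nullity by testing against $\mathrm{span}(\mathbf 1_{B_k})$, which is dense in $L^{p^*}(\mu_d)$ since $p^*<\infty$. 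Your first route (measure-algebra isomorphism of $(\Omega_d,\mu_d)$ with a disjoint union of intervals, then pull back $e^{2\pi inx}$ and invoke Riemann--Lebesgue) buys an immediate weak-$*$ nullity at the price of importing a nontrivial classical structure theorem; your second route (Rademacher-type dyadic halvings on the level sets of a simple approximation, weak nullity via Bessel and the uniform bound $\|g_n\|_\infty=1$) is the closest in spirit to the paper's construction, but because it checks $\int_{E_k}r_n\bar\phi\,d\mu\to 0$ for each test function individually it does not actually need separability of $(\Omega,\mathcal A,\mu)$ for the lower bound --- a small gain in generality over the paper's argument. Both routes are viable; what remains is only to write out the chosen construction (in the second route, the bookkeeping of the $\veps$-loss from the simple approximation) in full detail.
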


If we allow $p=+\infty$, we lose a factor $1/2$ in the estimate of the essential norm.
\begin{theorem}\label{thm:multiplierlebesgueinfty}
Let $q\geq 1,$ let $(\Omega,\mathcal A,\mu)$ be a $\sigma$-finite separable
 measure space and let $u\in L^q(\mu)$. Then $\frac 12 \|u_{|\Omega_d}\|_q\leq \|M_u\|_{e,\infty\to q}\leq \|u_{|\Omega_d}\|_q.$
\end{theorem}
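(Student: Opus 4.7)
The plan is to split $u$ as $u = u_d + u_a$ with $u_a = u\mathbf{1}_{\Omega_a}$ and $u_d = u\mathbf{1}_{\Omega_d}$, and handle the two bounds separately. For the upper bound I will show that $M_{u_a}\colon L^\infty(\mu) \to L^q(\mu)$ is compact, so that it serves as the compact approximant and yields $\|M_u\|_{e,\infty\to q} \le \|M_u - M_{u_a}\|_{\infty\to q} = \|M_{u_d}\|_{\infty\to q} = \|u_d\|_q$ (the last equality being attained at $f \equiv 1$). Since $\mu$ is $\sigma$-finite every atom $A_n$ has finite measure and $u$ is essentially constant on $A_n$, equal to some scalar $c_n$; the operator $M_{u_a}$ then factors as $L^\infty(\mu) \to \ell^\infty \to \ell^q \to L^q(\mu)$, the middle arrow being the diagonal multiplier $x \mapsto (c_n \mu(A_n)^{1/q} x_n)_n$ with $\ell^q$-summable diagonal of $\ell^q$-norm $\|u_a\|_q$. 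Any such diagonal operator $\ell^\infty \to \ell^q$ is a uniform limit of its finite-rank truncations, hence compact.

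For the lower bound the key input is a Rademacher-type sequence on $\Omega_d$. Writing $\Omega_d$ as a countable disjoint union of pieces of finite $\mu_d$-measure (by $\sigma$-finiteness), each piece inherits diffuseness and separability, so a classical isomorphism of measure algebras identifies it with a bounded Lebesgue interval; pulling back the classical Rademacher functions on each piece and gluing produces $\pm 1$-valued measurable $\epsilon_n$ on $\Omega_d$ satisfying $\int_{\Omega_d} \epsilon_n g\, d\mu \to 0$ for every $g \in L^1(\mu_d)$. (For a fixed $g$, the tail of the sum over the pieces is controlled uniformly in $n$ by $\int |g|\, d\mu$, while the finitely many leading pieces each tend to zero by the classical Rademacher property on the corresponding interval.) Extending each $\epsilon_n$ by zero on $\Omega_a$ gives $\epsilon_n \in L^\infty(\mu)$ with $\|\epsilon_n\|_\infty = 1$ and $\|M_u \epsilon_n\|_q = \|u_d\|_q$ for every $n$.

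Given any compact $K\colon L^\infty(\mu) \to L^q(\mu)$, I extract a subsequence (still denoted $\epsilon_n$) along which $K\epsilon_n \to g$ in $L^q(\mu)$. The Rademacher property forces $u\epsilon_n \rightharpoonup 0$ weakly in $L^q(\mu)$: for any $h$ in the predual of $L^q(\mu)$ (namely $L^{q'}(\mu)$ when $q > 1$ and $L^\infty(\mu)$ when $q = 1$), H\"older gives $uh \in L^1(\mu)$, so $\int u\epsilon_n h\, d\mu = \int_{\Omega_d} \epsilon_n (uh)\, d\mu_d \to 0$. Hence $(M_u - K)\epsilon_n \rightharpoonup -g$ weakly in $L^q(\mu)$; weak lower semicontinuity of the norm gives $\liminf_n \|(M_u - K)\epsilon_n\|_q \ge \|g\|_q$, while the reverse triangle inequality combined with $\|u\epsilon_n\|_q = \|u_d\|_q$ gives $\liminf_n \|(M_u - K)\epsilon_n\|_q \ge \|u_d\|_q - \|g\|_q$. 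Taking the larger of the two bounds and then the infimum over $K$ yields $\|M_u\|_{e,\infty\to q} \ge \tfrac{1}{2}\|u_d\|_q$. The main technical obstacle I anticipate is the Rademacher construction on $\Omega_d$, where both separability and the gluing over the $\sigma$-finite partition are essential; the soft-analytic remainder works uniformly for all $q \ge 1$, and the factor $\tfrac{1}{2}$ is precisely what $\max(a, b - a) \ge b/2$ imposes on this scheme, accounting for the gap between the two stated bounds.
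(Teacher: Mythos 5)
Your proof is correct, and the lower bound takes a genuinely different route from the paper's. For the upper bound both arguments exploit compactness of the atomic part: you observe that $M_{u\mathbf 1_{\Omega_a}}$ is itself compact (via the diagonal factorization through $\ell^\infty\to\ell^q$), whereas the paper approximates by finite truncations over the atoms --- same substance. For the lower bound the paper works in the target space: it takes the conditional expectations $\mathcal Q_n=\mathbb E(\cdot\mid\mathcal A_n)$ onto the finite algebras generated by the separating sets as the compact operators, applies Lemma \ref{lem:generalargument}, part a), to $\mathcal R_n=I-\mathcal Q_n$ (this is where the factor $\frac12$ is lost, through $\|\mathcal R_n\|\leq 2$), and then bisects each cell of $\mathcal A_n$ with respect to the $u$-mass to build a $\pm 1$-valued $g_n$ with $\mathcal Q_nM_ug_n=0$ exactly, so that $\|\mathcal R_nM_ug_n\|_q=\|u\|_q$ with no further loss. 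You instead build a single weak-star null Rademacher-type sequence $(\epsilon_n)$, test an arbitrary compact $K$ directly, and lose the factor $\frac12$ through the dichotomy $\max\bigl(\|g\|_q,\|u_d\|_q-\|g\|_q\bigr)\geq\frac12\|u_d\|_q$ coming from weak lower semicontinuity versus the reverse triangle inequality. The trade-off: the paper only needs $g_n$ orthogonal to a fixed finite algebra, an elementary bisection, while you need $\int_{\Omega_d}\epsilon_n g\,d\mu\to 0$ for \emph{every} $g\in L^1(\mu_d)$, for which you invoke the measure-algebra isomorphism with an interval --- legitimate for a separable diffuse finite measure algebra, and your $\sigma$-finite gluing with the uniform tail bound is sound (a direct dyadic bisection plus density of the $\mathbf 1_{B_n}$ in $L^1$ and the uniform bound $|\int\epsilon_n g|\leq\|g\|_1$ would avoid that theorem). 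Your scheme also dispenses with the paper's preliminary reductions to $\Omega=\Omega_d$ of finite measure; on the other hand, the paper's conditional-expectation route upgrades the constant to $1$ when $q=2$ via orthogonality of $\mathcal Q_n$, which your dichotomy does not obviously recover.
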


\subsection{A general argument}
We shall use several times the following lemma, inspired by \cite[Proposition 2.3]{Cha12}. 
\begin{lemma}\label{lem:generalargument}
Let $X,Y$ be Banach spaces, let $T\in\mathcal L(X,Y)$ and let $\lambda>0.$ 
\begin{enumerate}[a)]
\item Let $(\mathcal R_n)\subset \mathcal L(Y)$ be a sequence of bounded 
operators such that $\|\mathcal R_n\|\leq \lambda$ for all $n.$ 
Assume that $(\mathcal R_n)$ converges pointwise to $0.$ Then 
$$\|T\|_{e,X\to Y}\geq \frac 1\lambda \limsup_n \|\mathcal R_n T\|_{X\to Y}.$$
\item Let $(\mathcal Q_n)\subset \mathcal L(X)$ be a sequence of compact operators
and let $\mathcal R_n=\mathrm{Id}_X-\mathcal Q_n.$ Then $\|T\|_{e,X\to Y}\leq \liminf_n \| T\mathcal R_n\|_{X\to Y}.$
\end{enumerate}
\end{lemma}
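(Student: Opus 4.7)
The plan is to use the definition $\|T\|_{e,X\to Y}=\inf\{\|T-K\|_{X\to Y}:K\text{ compact}\}$ and produce natural compact perturbations of $T$ in each part.

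For part (a), I would fix an arbitrary compact operator $K:X\to Y$ and write
\[
\|\mathcal R_n T\|_{X\to Y}\leq \|\mathcal R_n(T-K)\|_{X\to Y}+\|\mathcal R_n K\|_{X\to Y}\leq \lambda\|T-K\|_{X\to Y}+\|\mathcal R_n K\|_{X\to Y}.
\]
The crucial step is then to show $\|\mathcal R_n K\|_{X\to Y}\to 0$. This is where the compactness of $K$ enters: the set $K(B_X)$ is relatively compact in $Y$, and a uniformly bounded sequence of operators on $Y$ converging pointwise to $0$ must converge to $0$ uniformly on relatively compact sets. Taking the $\limsup$ and then the infimum over compact $K$ gives $\limsup_n\|\mathcal R_n T\|_{X\to Y}\leq \lambda\|T\|_{e,X\to Y}$, which is the claimed bound.

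For part (b), the argument is much shorter. Since each $\mathcal Q_n$ is compact and $T$ is bounded, $T\mathcal Q_n$ is compact as an operator from $X$ to $Y$, so it is an admissible approximant of $T$ in the essential-norm infimum. Thus
\[
\|T\|_{e,X\to Y}\leq \|T-T\mathcal Q_n\|_{X\to Y}=\|T\mathcal R_n\|_{X\to Y},
\]
and taking the $\liminf$ in $n$ yields the conclusion.

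The main obstacle is really only in (a): establishing $\|\mathcal R_n K\|_{X\to Y}\to 0$ from pointwise convergence. I would justify this with the standard equicontinuity/compactness trick (cover $K(B_X)$ by finitely many $\varepsilon$-balls around points $y_1,\ldots,y_N$, bound $\|\mathcal R_n Kx\|$ by $\|\mathcal R_n y_j\|+2\lambda\varepsilon$ for a well-chosen $j$, and let $n\to\infty$ followed by $\varepsilon\to 0$). Part (b) has no genuine difficulty beyond observing that $T\mathcal Q_n$ is compact.
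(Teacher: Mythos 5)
Your proposal is correct and follows essentially the same route as the paper: in (a) both arguments reduce to showing $\|\mathcal R_n K\|\to 0$ for compact $K$ via the standard $\varepsilon$-net/uniform-convergence-on-compacta argument, and in (b) both simply observe that $T\mathcal Q_n$ is a compact approximant so that $\|T\|_{e,X\to Y}\leq\|T-T\mathcal Q_n\|_{X\to Y}=\|T\mathcal R_n\|_{X\to Y}$. No gaps.
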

\begin{proof}
\begin{enumerate}[a)]
\item Let $K:X\to Y$ be  compact. Then 
\begin{align*}
\|T-K\|_{X\to Y}&\geq \frac 1\lambda \|\mathcal R_n T-\mathcal R_n K\|_{X\to Y}\\
&\geq \frac 1\lambda\|\mathcal R_n T\|_{X\to Y}-\frac 1{\lambda}\|\mathcal R_n K\|_{X\to Y}.
\end{align*}
Now, since $K$ is compact, $(\|\mathcal R_n\|)$ is bounded and $(\mathcal R_n)$ converges pointwise to $0$; it follows from a standard compactness argument that 
$\|\mathcal R_n K\|_{X\to Y}$ tends to $0.$ Hence
$$\|T-K\|_{X\to Y}\geq\frac 1{\lambda} \limsup_n \|\mathcal R_n T\|_{X\to Y}$$
and we conclude by taking the infimum over the compact operators $K:X\to Y.$
\item This is an easy consequence of 
$$\|T\|_{e,X\to Y}=\|T\mathcal R_n+T\mathcal Q_n\|_{e,X\to Y}\leq \|T\mathcal R_n\|_{X\to Y}.$$
\end{enumerate}
\end{proof}

\subsection*{Notation}
Throughout this paper, we shall use the following notations. For $p\geq 1,$ $p^*$ will
stand for the conjugate exponent of $p.$ We shall denote by $\sigma$  the rotation invariant probability measure on $\Sd$. For two points $z,w\in \CC^d,$ we write
$$\langle z,w\rangle=\sum_{j=1}^d z_j\overline{w_j}$$
and $|z|=\sqrt{\langle z,z\rangle}.$ If we consider two functions $f:E\to \RR,$ 
we write $f\lesssim g$ is there is some $c>0$ such that $f\leq cg$ and $f\asymp g$ if 
$f\lesssim g$ and $g\lesssim f.$

\section{Inclusion operators}\label{sec:IO}

\subsection{Some results on functions on the ball}

Let $\varphi:\Bd\to\Bd$ be holomorphic. For almost every $\xi\in \Sd$, $\varphi^*(\xi)=\lim_{r\to 1}\varphi(r\xi)$
exists. Thus we may regard $\varphi$ as a map of $\overline{\Bd}$ into $\overline{\Bd}$ and we will usually
continue to write $\varphi$ for this map, and reserve the notation $\varphi^*$ for the map from $\Sd$ into $\overline{\Bd}$
as defined above.

The existence of inner functions on $\Bd$ will play an essential role. In particular, we shall use the following corollary (see \cite{Low82}): for every $G:\Sd\to (0,+\infty)$ continuous,
one may find $f\in H^\infty(\Bd)$ such that $|f|=G$ a.e. on $\Sd.$
In particular this yields the following lemma.
\begin{lemma}\label{lem:superinner}
 Let $1\leq q<p$ and set $s=p/(p-q).$ Then for all $F:\mathbb S^d\to[0,+\infty)$ measurable, 
 $$\|F\|_{L^s(\sigma)}=\sup\left(\int_{\Sd} F|g|^qd\sigma:\ g\in B_{H^p(\Bd)}\right).$$
\end{lemma}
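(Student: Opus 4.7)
The upper bound $\sup\leq \|F\|_{L^s(\sigma)}$ is immediate from Hölder's inequality: since $q/p+(p-q)/p=1$, the exponents $p/q$ and $s$ are conjugate, so for any $g\in B_{H^p(\Bd)}$,
$$\int_{\Sd} F|g|^q\,d\sigma \leq \|F\|_{L^s(\sigma)}\,\||g|^q\|_{L^{p/q}(\sigma)} = \|F\|_{L^s(\sigma)}\,\|g\|_{L^p}^q \leq \|F\|_{L^s(\sigma)}.$$

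For the reverse inequality, my plan is to combine $L^s$--$L^{p/q}$ duality with the corollary of \cite{Low82} stated just before the lemma. A truncation $F_M=F\wedge M$ together with monotone convergence reduces the problem to bounded $F$ with $0<\|F\|_{L^s(\sigma)}<\infty$; the case $\|F\|_{L^s(\sigma)}=+\infty$ then follows by letting $M\to\infty$, since the right-hand supremum is monotone in $F$. Under this assumption, the Hölder extremiser
$$h_0=\frac{F^{s-1}}{\|F\|_{L^s(\sigma)}^{s-1}}$$
is nonnegative with $\|h_0\|_{L^{p/q}(\sigma)}=1$ and $\int F h_0 \,d\sigma = \|F\|_{L^s(\sigma)}$, so it is enough to produce, for each $\veps>0$, a function $g_\veps\in B_{H^p(\Bd)}$ with $|g_\veps|^q$ close to $h_0$ in $L^{p/q}(\sigma)$.

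Set $G_0=h_0^{1/q}\in L^p(\sigma)$. By density one may pick a continuous strictly positive $G_\veps:\Sd\to(0,+\infty)$ with $\|G_\veps-G_0\|_{L^p(\sigma)}<\veps$ (strict positivity is ensured by adding a small positive constant before truncating). The quoted corollary then yields $f_\veps\in H^\infty(\Bd)$ with $|f_\veps|=G_\veps$ a.e.\ on $\Sd$, and the renormalisation $g_\veps=f_\veps/\max(1,\|G_\veps\|_{L^p(\sigma)})$ lies in $B_{H^p(\Bd)}$. By continuity of $t\mapsto t^q$ from $L^p_+(\sigma)$ to $L^{p/q}_+(\sigma)$, one has $G_\veps^q\to h_0$ in $L^{p/q}(\sigma)$; pairing with the bounded $F\in L^s(\sigma)$ yields $\int F|g_\veps|^q\,d\sigma\to \|F\|_{L^s(\sigma)}$, which completes the argument. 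The main subtlety I foresee is the need to approximate $h_0^{1/q}$ in $L^p$ rather than $h_0$ in $L^{p/q}$, so that the rescaling factor $\max(1,\|G_\veps\|_{L^p(\sigma)})\to 1$ does not incur an asymptotic loss; the rest is routine density and Hölder manipulation.
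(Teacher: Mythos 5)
Your argument is correct and follows essentially the same route as the paper: Hölder for the upper bound, and for the lower bound the $L^{s}$--$L^{p/q}$ duality realized on continuous strictly positive test functions combined with L\o w's theorem producing $f\in H^\infty(\Bd)$ with $|f|=G^{1/q}$ on $\Sd$, so that $\|f\|_p=\|G\|_{p/q}^{1/q}$. The only difference is cosmetic: the paper quotes the restricted duality formula directly, whereas you reconstruct it from the explicit extremizer $h_0$ and an approximation/renormalisation step (and you handle $\|F\|_s=+\infty$ by truncation, which the paper leaves implicit).
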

\begin{proof}
 This follows from
 $$\|F\|_s=\sup\left(\int_{\Sd} F Gd\sigma:\ G:\Sd\to(0,+\infty)\textrm{ continuous},\ \|G\|_{s^*}=1\right\}$$
 and from $\|g\|_p=1$ provided $g\in H^\infty(\Bd)$ is such that $|g|=G^{1/q}$ a.e. on $\Sd$ with $\|G\|_{s^*}=1$ (here, $s^*=p/q$).
\end{proof}

For $\xi\in\Sd,$ the admissible approach region $\Gamma(\xi)$ is defined as 
$$\Gamma(\xi)=\left\{z\in\Bd: |1-\langle z,\xi\rangle|<1-|z|^2\right\}.$$
As a consequence of Fubini's theorem, one can prove (see e.g. \cite[equation (2.1)]{Pau16}) that for all nonnegative mesurable functions $f$ and for all positive
Borel measures $\mu,$
\begin{equation}\label{eq:pau2}
\int_{\Bd}f(z) d\mu(z)\asymp \int_{\Sd}\int_{\Gamma(\xi)} f(z) \frac{d\mu(z)}{(1-|z|^2)^d}d\sigma(\xi).
\end{equation}

If $f$ is a function on $\Bd,$ its maximal function $\mathcal Mf$ is defined on $\Sd$ by $\mathcal Mf(\xi)=\sup_{z\in \Gamma(\xi)}|f(z)|$. The maximal function has the following $L^p$-boundedness property (\cite[Theorem 5.4.10]{rudinball}): for all $p>1,$ there exists $A(p)>0$ such that, for all $f\in H^p(\Bd)$,  
$\|\mathcal Mf\|_{L^p(\sigma)}\leq A(p)\|f\|_{H^p}$.

\subsection{Essential norms of inclusion operators on $H^p(\Bd)$}

Let $\mu$ be a positive Borel measure on $\Bd$. We are intested in the inclusion operator $J_\mu:H^p(\Bd)\to L^q(\mu)$ when $p>q\geq 1.$ This operator
has already been investigated in \cite{Pau16} where it is shown that $J_\mu$ is continuous if and only if $\hat\mu:\xi\in\Sd\mapsto \int_{\Gamma(\xi)} \frac{d\mu(z)}{(1-|z|^2)^d}\in L^s(\sigma)$ where $s=p/(p-q).$

Our first result is that the continuity of $J_\mu$ implies its compactness.

\begin{theorem}\label{thm:compactinclusion}
Let $p>q\geq 1,$ let $\mu$ be a positive Borel measure on $\Bd$ and let $s=p/(p-q)$. The following assertions are equivalent:
\begin{enumerate}[(i)]
\item $J_\mu:H^p(\Bd)\to L^q(\mu)$ is continuous.
\item $J_\mu:H^p(\Bd)\to L^q(\mu)$ is compact.
\item $\displaystyle \hat\mu: \xi\in\Sd\mapsto \int_{\Gamma(\xi)}\frac{d\mu(z)}{(1-|z|^2)^d}$ belongs to $L^s(\sigma).$
\end{enumerate}
\end{theorem}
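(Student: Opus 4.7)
The plan is to establish the chain (ii) $\Rightarrow$ (i) $\Rightarrow$ (iii) $\Rightarrow$ (ii). The first implication is immediate and the equivalence (i) $\Leftrightarrow$ (iii) is the content of \cite{Pau16}, so the real task is to prove (iii) $\Rightarrow$ (ii). The workhorse will be a quantitative version of (iii) $\Rightarrow$ (i): for any positive Borel measure $\nu$ on $\Bd$ with $\hat\nu\in L^s(\sigma)$, combining the tent identity \eqref{eq:pau2} with the pointwise bound $|f(z)|\leq \mathcal Mf(\xi)$ valid on $\Gamma(\xi)$, then applying Hölder's inequality with the conjugate exponents $s^*=p/q$ and $s$, and finally the $L^p$-boundedness of $\mathcal M$, yields
$$\int_{\Bd}|f|^q\,d\nu \lesssim \int_{\Sd}(\mathcal Mf)^q\hat\nu\,d\sigma \leq \|\mathcal Mf\|_{L^p}^q\|\hat\nu\|_{L^s} \lesssim \|f\|_{H^p}^q\|\hat\nu\|_{L^s},$$
so that $\|J_\nu\|_{H^p\to L^q(\nu)}\lesssim \|\hat\nu\|_{L^s}^{1/q}$.

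For the main step, fix $r\in(0,1)$ and split $\mu=\mu_r+\nu_r$, where $\mu_r=\mu|_{r\overline{\Bd}}$. Let $M_r:L^q(\mu)\to L^q(\mu)$ denote multiplication by $\mathbf 1_{r\overline{\Bd}}$, so that $J_\mu - M_rJ_\mu$ sends $f\in H^p$ to $f\mathbf 1_{\Bd\setminus r\overline{\Bd}}\in L^q(\mu)$, an operator whose norm equals $\|J_{\nu_r}\|_{H^p\to L^q(\nu_r)}$. Observe that
$$\hat{\nu_r}(\xi)=\int_{\Gamma(\xi)\setminus r\overline{\Bd}}\frac{d\mu(z)}{(1-|z|^2)^d}$$
decreases pointwise to $0$ as $r\to 1^-$ and is dominated by $\hat\mu\in L^s(\sigma)$. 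Dominated convergence gives $\|\hat{\nu_r}\|_{L^s}\to 0$, and the quantitative bound above yields $\|J_\mu-M_rJ_\mu\|_{H^p\to L^q(\mu)}\to 0$.

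It remains to check that $M_rJ_\mu$ is compact for each fixed $r$. Since $s>1$, $L^s(\sigma)\subset L^1(\sigma)$, and applying \eqref{eq:pau2} to $f\equiv 1$ gives $\mu(\Bd)\asymp\|\hat\mu\|_{L^1}<\infty$; in particular $\mu(r\overline{\Bd})<\infty$. Now let $(f_k)$ be bounded in $H^p(\Bd)$. By Montel's theorem, a subsequence converges uniformly on the compact set $r\overline{\Bd}$, and uniform convergence on the support of $\mathbf 1_{r\overline{\Bd}}\,d\mu$ together with finiteness of this measure yields convergence in $L^q(\mu)$. Hence $M_rJ_\mu$ is compact, and $J_\mu$, being a norm limit of compact operators, is itself compact.

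The crux of the argument is the dominated-convergence step producing $\|\hat{\nu_r}\|_{L^s}\to 0$, which critically uses the fact that $\hat\mu$ itself lies in $L^s$ as an integrable dominating function. Everything else is routine once the quantitative estimate for $\|J_\nu\|$ and the standard normal-family principle are in hand.
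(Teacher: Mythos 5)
Your proof is correct and rests on exactly the same two ingredients as the paper's argument: the maximal-function/H\"older estimate giving $\|J_{\nu_r}\|_{p\to q}\lesssim\|\hat{\nu_r}\|_{L^s}^{1/q}$ for the part of $\mu$ near the boundary, and dominated convergence (with $\hat\mu\in L^s$ as the dominating function) to send this to zero as $r\to 1$. The only difference is the soft wrapper at the end: you realize $J_\mu$ as a norm limit of the compact operators $M_rJ_\mu$ (correctly supplying the needed fact $\mu(r\overline{\Bd})<\infty$ via \eqref{eq:pau2}), whereas the paper invokes reflexivity of $H^p$ and verifies complete continuity on weakly null sequences; both conclusions are standard and interchangeable here.
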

\begin{proof}
Only the implication $(iii)\implies (ii)$ has to be done. Since $H^p(\Bd)$ is reflexive (recall that $p>1$) we only have to show that $J_\mu$ is completely continuous. Let $(f_n)$ be a weakly-null sequence in $H^p(\Bd)$. Using \eqref{eq:pau2}, we have to prove that 
\begin{equation}\label{eq:compactinclusion}
\int_{\Sd}\int_{\Gamma(\xi)}|f_n(z)|^q \frac{d\mu(z)}{(1-|z|^2)^d}d\sigma(\xi)\to 0.
\end{equation}
Let $\veps>0$, let $r\in(0,1)$ and let us set $\Gamma_r(\xi)=\{z\in \Gamma(\xi):\ |z|\leq r\}$. On the one hand, 
\begin{align}
\int_{\Sd}\int_{\Gamma(\xi)\backslash\Gamma_r(\xi)}|f_n(z)|^q \frac{d\mu(z)}{(1-|z|^2)^d}d\sigma(\xi)&\leq \int_{\Sd} |\mathcal Mf_n(\xi)|^q  \int_{\Gamma(\xi)\backslash\Gamma_r(\xi)}\frac{d\mu(z)}{(1-|z|^2)^d}d\sigma(\xi) \nonumber \\
&\lesssim \|f_n\|_p^q \left(\int_{\Sd}|\widehat{\mu_r}(\xi)|^sd\sigma(\xi)\right)^{1/s}
\label{eq:pau}
\end{align}
where we have used Hölder's inequality with exponents $s$ and $s^*=p/q$ and we have set $\mu_r$ the restriction of $\mu$ to $\Gamma(\xi)\backslash \Gamma_r(\xi)$.
Observe that, for all $r\in(0,1)$ and all $\xi\in\Sd,$ $\widehat{\mu_r}(\xi)\leq \widehat{\mu}(\xi)$ and $\hat \mu\in L^s(\sigma).$ We prove that $\widehat{\mu_r}(\xi)$ 
tends to $0$ as $r$ tends to $1$ for almost all $\xi\in\Sd.$ Write 
$$\widehat{\mu_r}(\xi)=\int_{\Gamma(\xi)}F_r(z)d\mu(z)$$
with $F_r(z)=\mathbf 1_{\overline{\Bd}\backslash r\overline{\Bd}}(z)\frac{1}{(1-|z|^2)^d},$ $z\in\Gamma(\xi).$ Let $\xi$ be such that $\widehat{\mu}(\xi)<+\infty$ (this holds for a.e. $\xi$ since $\widehat{\mu}\in L^s(\sigma)$). Then
$1/(1-|z|^2)^d\in L^1(\Gamma(\xi),\mu)$ and $F_r(z)\to 0$ as $r\to 1$. 
Lebesgue's theorem implies that $\widehat{\mu_r}(\xi)$ tends to zero.  Therefore,
by a second application of Lebesgue's theorem, $\|\widehat{\mu_r}\|_s$ tends to zero and it follows, since $(\|f_n\|_p)$ is bounded, that for $r$ sufficiently close to $1$ and for all $n\geq 1,$
$$\int_{\Sd}\int_{\Gamma(\xi)\backslash\Gamma_r(\xi)}|f_n(z)|^q \frac{d\mu(z)}{(1-|z|^2)^d}d\sigma(\xi)\leq\veps.$$
Such a value of $r$ being fixed, we now observe that $(f_n)$ converges uniformly to $0$ in $r\overline{\Bd}.$ This implies easily that, for $n$ large enough,
$$\int_{\Sd}\int_{\Gamma_r(\xi)}|f_n(z)|^q \frac{d\mu(z)}{(1-|z|^2)^d}d\sigma(\xi)\leq\veps.$$
Hence \eqref{eq:compactinclusion} is proved and $J_\mu$ is completely continuous.
\end{proof}
In view of our application to composition operators, we are going to enlarge our setting 
to positive Borel measures $\mu$ on the closed ball $\overline{\Bd}$ (see \cite{BJ05} 
where the one-variable case is studied). Let $\mu$ be such a mesure, set $\mub$ its restriction to $\Bd$ and $\mus$ its restriction to $\Sd$. We are interested in the inclusion operator $J_\mu:H^p(\Bd)\to L^q(\mu),$ $1\leq q<p.$ Now observe that 
functions in $H^p(\Bd)$ are only defined almost everywhere on $\Sd$. This leads us
to restrict ourselves to the case where $\mus=Fd\sigma$ for some nonnegative $F\in L^1(\sigma).$ Under these assumptions, we can characterize the continuity of $J_\mu$: 
\begin{proposition}\label{prop:inclusionbord}
Let $p>q\geq 1,$ let $s=p/(p-q)$ and let $\mu=\mub+Fd\sigma$ be a positive Borel measure on $\overline{\Bd}$. Then $J_\mu:H^p(\Bd)\to L^q(\mu)$ is bounded if and only if 
\begin{enumerate}[a)]
\item $\displaystyle \hat\mu: \xi\in\Sd\mapsto \int_{\Gamma(\xi)}\frac{d\mu(z)}{(1-|z|^2)^d}$ belongs to $L^s(\sigma).$
\item $F$ belongs to $L^s(\sigma).$
\end{enumerate}
\end{proposition}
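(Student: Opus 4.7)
The plan is to decompose $\mu=\mub+F\,d\sigma$ and treat the two pieces independently. For any $f\in H^p(\Bd)$ (using boundary values on $\Sd$),
$$\int_{\overline{\Bd}}|f|^q\,d\mu=\int_{\Bd}|f|^q\,d\mub+\int_{\Sd}|f^*|^q F\,d\sigma,$$
and since $\Gamma(\xi)\subset\Bd$, the function $\hat\mu$ coincides with the corresponding quantity attached to $\mub$ alone, so condition a) genuinely constrains only the ball part.

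For the sufficiency, assume a) and b). Theorem \ref{thm:compactinclusion} applied to the Borel measure $\mub$ on $\Bd$ provides a constant $C$ with $\int_{\Bd}|f|^q\,d\mub\leq C\|f\|_p^q$. For the boundary term, Hölder's inequality with conjugate exponents $s$ and $s^*=p/q$ yields
$$\int_{\Sd}|f^*|^q F\,d\sigma\leq \|F\|_s\,\bigl\||f^*|^q\bigr\|_{s^*}=\|F\|_s\,\|f\|_p^q.$$
Summing these two estimates gives the boundedness of $J_\mu$.

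For the necessity, suppose $J_\mu$ is bounded. Since $F\geq 0$, one has $\mub\leq\mu$ as measures, so $J_{\mub}$ is a fortiori bounded; Theorem \ref{thm:compactinclusion} then forces $\hat\mu=\hat\mub\in L^s(\sigma)$, establishing a). For b), I would invoke Lemma \ref{lem:superinner}: for every $g$ in the unit ball of $H^p(\Bd)$,
$$\int_{\Sd}F|g|^q\,d\sigma\leq\int_{\overline{\Bd}}|g|^q\,d\mu\leq\|J_\mu\|^q\|g\|_p^q\leq\|J_\mu\|^q,$$
and taking the supremum over such $g$ gives $\|F\|_s\leq\|J_\mu\|^q<+\infty$.

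No serious obstacle is expected here, since the decomposition cleanly separates the ball and boundary contributions and each is handled by a tool already in hand. The one point worth emphasizing is that Lemma \ref{lem:superinner}, which rests on the existence of inner-function-type factorizations in $H^\infty(\Bd)$, is exactly what converts the $L^q(F\,d\sigma)$ bound on $H^p$ into control of $\|F\|_s$; a naive duality argument would only relate $F$ to $L^{s^*}$ test functions rather than to $|g|^q$ with $g\in H^p$.
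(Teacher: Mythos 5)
Your proposal is correct and follows essentially the same route as the paper: Pau's criterion (Theorem \ref{thm:compactinclusion}) handles the interior part $\mub$, H\"older's inequality with exponents $p/q$ and $s$ handles the boundary term, and Lemma \ref{lem:superinner} converts the bound $\int_{\Sd}F|g|^q\,d\sigma\leq\|J_\mu\|^q$ over $g\in B_{H^p}$ into $\|F\|_s<+\infty$. Your explicit remarks that $\hat\mu=\hat\mub$ (since $\Gamma(\xi)\subset\Bd$) and that naive duality would be insufficient are accurate clarifications of points the paper leaves implicit.
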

\begin{proof}
That a) and b) imply the continuity of $J_\mu$ follows partly from Pau's result and partly from Hölder's inequality. Indeed, for $f\in H^p(\Bd),$ 
$$\int_{\Sd}|f|^q Fd\sigma\leq \left(\int_{\Sd}|f|^p d\sigma\right)^{q/p}\left(\int_{\Sd} F^sd\sigma\right)^{1/s}$$
where we have used Hölder's inequality with the exponents $p/q$ and $s.$ Conversely assume that $J_\mu$ is bounded. Again, a) follows from Pau's result. To prove b) we observe that for all $g\in H^p(\Bd)$ with norm $1,$ 
$$\int_{\Sd}F|g|^q d\sigma\leq \|J_\mu\|_{p\to q}^q$$
and we conclude by Lemma \ref{lem:superinner}.
\end{proof}

We now characterize compactness and compute the essential norm.
\begin{theorem}\label{thm:inclusion}
Let $p>q\geq 1,$ let $s=p/(p-q)$ and let $\mu=\mub+Fd\sigma$ be a positive Borel measure on $\overline{\Bd}$ such that $J_\mu$ is continuous. Then $\|J_\mu\|_{e,p\to q}=\|F\|_s^{1/q}.$
\end{theorem}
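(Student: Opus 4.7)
The plan is to split the norm on $L^q(\mu)$ into its interior and boundary contributions, namely
$$\|J_\mu f\|_q^q = \int_{\Bd}|f|^q d\mub + \int_{\Sd}|f^*|^q F\, d\sigma,$$
and observe that the first piece is already compact by Theorem \ref{thm:compactinclusion} (since $\widehat{\mub}=\hat\mu \in L^s(\sigma)$ by Proposition \ref{prop:inclusionbord}), so the whole question reduces to controlling the boundary operator $T\colon H^p(\Bd)\to L^q(Fd\sigma)$, $Tf=f^*$.

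For the \emph{upper bound}, identifying $L^q(\mu)$ with the $\ell^q$-direct sum $L^q(\mub)\oplus L^q(Fd\sigma)$ exhibits $J_\mu$ as the compact operator $J_{\mub}$ plus $T$, so $\|J_\mu\|_{e,p\to q}\leq \|T\|_{p\to q}$. Hölder's inequality with exponents $p/q$ and $s$ gives
$$\int_{\Sd}F|f|^q d\sigma\leq \|f\|_p^q\|F\|_s,$$
whence $\|T\|_{p\to q}\leq \|F\|_s^{1/q}$.

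For the \emph{lower bound}, fix $\veps>0$ and apply Lemma \ref{lem:superinner} to find $g\in B_{H^p(\Bd)}$ with $\int_{\Sd}F|g|^q d\sigma\geq \|F\|_s-\veps$. Using the existence of inner functions on $\Bd$ (the same ingredient already cited from \cite{Low82} before Lemma \ref{lem:superinner}), select a nonconstant inner $h\in H^\infty(\Bd)$ with $h(0)=0$, and set $f_n=g\cdot h^n$. Since $|h^n|=1$ a.e.\ on $\Sd$, one has $\|f_n\|_p=\|g\|_p\leq 1$ and $\int_{\Sd}F|f_n|^q d\sigma=\int_{\Sd}F|g|^qd\sigma$; by the maximum principle $|h|<1$ on $\Bd$, so $h^n\to 0$ uniformly on compact subsets of $\Bd$, hence $f_n\to 0$ uniformly on compacts and, being bounded in $H^p$, weakly in the reflexive space $H^p(\Bd)$. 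Given any compact $K\colon H^p(\Bd)\to L^q(\mu)$, we have $\|Kf_n\|_q\to 0$ by complete continuity, and similarly $\int_{\Bd}|f_n|^q d\mub\to 0$ because $J_{\mub}$ is compact. Therefore
$$\liminf_n\|(J_\mu-K)f_n\|_q\geq \liminf_n \|J_\mu f_n\|_q=\Big(\int_{\Sd}F|g|^qd\sigma\Big)^{1/q}\geq(\|F\|_s-\veps)^{1/q}.$$
Taking the infimum over $K$ and letting $\veps\to 0$ yields $\|J_\mu\|_{e,p\to q}\geq \|F\|_s^{1/q}$.

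The only delicate point is constructing a sequence in $B_{H^p(\Bd)}$ that is weakly null yet preserves the boundary modulus of the near-extremal $g$; this is exactly what inner functions on $\Bd$ provide, so the argument hinges on Aleksandrov/Low type constructions — but those have already been invoked earlier in the paper, so the proof is essentially a packaging of Theorem \ref{thm:compactinclusion}, Lemma \ref{lem:superinner} and Hölder's inequality.
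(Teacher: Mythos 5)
Your proposal is correct and follows essentially the same route as the paper: split off the compact interior part $J_{\mub}$ and bound the boundary piece by Hölder for the upper estimate, then use Lemma \ref{lem:superinner} together with powers of an inner function vanishing at $0$ to produce a weakly null sequence with prescribed boundary modulus for the lower estimate. The only cosmetic difference is that you justify the weak nullity of $g h^n$ directly via boundedness plus local uniform convergence, where the paper cites \cite{GorMacCluer04}.
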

\begin{proof}
Let $T:H^p(\Bd)\to L^q(\mu),\ f\mapsto f\mathbf 1_{\Bd}$. Then $T=i\circ J_{\mub}$ where $i$ is the inclusion operator $L^q(\mub)\to L^q(\mu)$. Since $J_{\mub}$ is compact, $T$ is also compact. Moreover, for $f\in H^p(\Bd),$ 
\begin{align*}
\|(J_\mu-T)f\|_{L^q (\mu)}&=\left(\int_{\Sd}|f|^q d\mus\right)^{1/q}\\
&=\left(\int_{\Sd}|f|^q Fd\sigma\right)^{1/q}\\
&\leq \|f\|_p \cdot \|F\|_s^{1/q}
\end{align*}
by Hölder's inequality applied to the conjugate exponents $p/q$ and $s$.

Conversely, let $\veps>0$ and let $g\in B_{H^p}$ be such that 
$$\int_{\Sd}F|g|^q d\sigma\geq \|F\|_s-\veps.$$
 Let also $I$ be an inner function on $\Bd$ with $I(0)=0$ and let us consider $g_{k}=I^k g.$ Then $(g_{k})_k$ converges weakly to $0$ (see \cite[p. 37]{GorMacCluer04}). Therefore, 
$$\|J_\mu\|_{e,p\to q}\geq \limsup_{k\to+\infty}
\frac{\|J_\mu(g_{k})\|_q}{\|g_{k}\|_p}.$$
Now, $\|g_{k}\|_p=\|g\|_p=1$ whereas 
$$\|J_\mu(g_{k})\|_q\geq\left(\int_{\Sd}|g|^q F d\sigma\right)^{1/q}\geq \left(\|F\|_s-\veps\right)^{1/q}.$$
Since $\veps>0$ is arbitrary, we get the lower inequality $\|J_\mu\|_{e,p\to q}\geq\|F\|_s^{1/q}.$
\end{proof}

\section{Composition operators}\label{sec:CO}

\subsection{Composition operators on Hardy spaces of the ball}

We turn to composition operators on the Hardy spaces of the ball $\Bd.$
We fix $\varphi$ a holomorphic self-map of $\Bd$ such that $C_\varphi$ induces a bounded composition operator on some (therefore, on all) Hardy spaces $H^p(\Bd)$. 
In particular, this implies the following facts which we shall use repeatedly:
\begin{itemize}
\item[(H1)] no set of positive measure in $\Sd$ is mapped by $\varphi$ onto a set of measure $0$ in $\Sd$ (see Corollary 3.38 of \cite{CoMcl95});
\item[(H2)] for any $f\in H^p(\Bd)$, $(f\circ\varphi)^*(\xi)=f(\varphi^*(\xi))$ for a.e. $\xi\in\Sd$ (see Lemma 1.6 of \cite{Mcl85}).
\end{itemize}
We point out that, in what follows, we could replace the assumption {\it $C_\varphi$ is continuous on $H^p$} by these two assumptions. They are satisfied for any
holomorphic self-map of $\Bd$ when $d=1.$

Suppose now that $\sigma(E)>0,$ where $E=E_\varphi=\{\xi\in\Sd:\ |\varphi(\xi)|=1\}.$ Then $\varphi^*_{|E}$ induces a 
nonsingular transformation from $(E,\sigma)$ into $(\varphi^*(E),\sigma).$ Let $F_\varphi$ be the Radon-Nikodym derivative
of $\sigma_{|E}\circ(\varphi^*)_{|\varphi^*(E)}^{-1}$ with respect to $\sigma_{|\varphi^*(E)}.$ We extend $F_\varphi$ on $\Sd$ outside $\varphi^*(E)$ by setting it equal to $0$.
If $\sigma(E)=0,$ we just set $F_\varphi=0$ on $\Sd.$ We intend to prove
the following general version of Theorem \ref{thm:compointro}.

\begin{theorem}\label{thm:compoball}
 Let $1\leq q<p$ and set $s=p/(p-q)$. For all analytic maps $\varphi:\Bd\to\Bd$ inducing a bounded operator $C_\varphi:H^p(\Bd)\to H^p(\Bd)$,
 $$\|F_\varphi\|_s^{1/q}\leq \|C_\varphi\|_{e,p\to q}\leq \min(2,\|P_q\|)\|F_\varphi\|_s^{1/q}$$
 where $P_q:L^q(\sigma)\to H^q(\Bd)$ is the Szegö projection.
\end{theorem}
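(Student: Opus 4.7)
The cornerstone is the isometric identity
$$\|C_\varphi f\|_{H^q}^q=\int_{\Sd}|f(\varphi^*(\xi))|^q\,d\sigma(\xi)=\int_{\overline{\Bd}}|f|^q\,d\mu_\varphi,$$
where $\mu_\varphi=\tilde\mu+F_\varphi\,d\sigma$ is the measure on $\overline{\Bd}$ whose interior part $\tilde\mu$ is the pushforward of $\sigma_{|\Sd\setminus E}$ by $\varphi^*$ and whose boundary part is $F_\varphi\,d\sigma$ (via change of variables on $E$). Continuity of $C_\varphi$ gives continuity of $J_{\mu_\varphi}$, so Proposition~\ref{prop:inclusionbord} yields $\hat{\tilde\mu}\in L^s(\sigma)$, whence Theorem~\ref{thm:compactinclusion} ensures that $J_{\tilde\mu}\colon H^p(\Bd)\to L^q(\tilde\mu)$ is compact. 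This decomposition drives both bounds.

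For the lower bound, Lemma~\ref{lem:superinner} yields, for each $\veps>0$, some $g\in B_{H^p(\Bd)}$ with $\int_\Sd|g|^q F_\varphi\,d\sigma\ge\|F_\varphi\|_s-\veps$. Pick an inner $I$ on $\Bd$ with $I(0)=0$ (see Section~\ref{sec:IO}) and set $g_k=I^k g$. Then $\|g_k\|_p=\|g\|_p$, and $g_k\rightharpoonup 0$ in $H^p$ as $p>1$. Split $\|C_\varphi g_k\|_q^q$ over $E$ and $\Sd\setminus E$: on $E$ the integrand is $|g(\varphi^*)|^q$ since $|I(\varphi^*)|=1$ a.e.\ (combining $|I^*|=1$ a.e.\ on $\Sd$ with the non-collapsing property (H1)), contributing $\int|g|^q F_\varphi\,d\sigma$; on $\Sd\setminus E$, $|I(\varphi^*)|<1$ and dominated convergence kills the integral. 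Hence $\|C_\varphi g_k\|_q^q\to\int|g|^q F_\varphi\,d\sigma$. For any compact $K\colon H^p\to H^q$, $\|Kg_k\|_q\to 0$ by weak-to-strong continuity, so $\|C_\varphi-K\|_{p\to q}\ge(\|F_\varphi\|_s-\veps)^{1/q}$; letting $\veps\to 0$ and $K$ vary gives $\|C_\varphi\|_{e,p\to q}\ge\|F_\varphi\|_s^{1/q}$.

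For the Szegö upper bound (applicable when $q>1$), write $\iota\circ C_\varphi=A_\varphi+B_\varphi$ in $L^q(\sigma)$, where $\iota\colon H^q\hookrightarrow L^q(\sigma)$ is the inclusion, $A_\varphi f(\xi)=f(\varphi(\xi))\mathbf 1_{\Sd\setminus E}(\xi)$, and $B_\varphi f(\xi)=f^*(\varphi^*(\xi))\mathbf 1_E(\xi)$. Change of variables gives $\|A_\varphi f\|_q=\|J_{\tilde\mu}f\|_{L^q(\tilde\mu)}$, so $A_\varphi$ factors through the compact $J_{\tilde\mu}$ via the isometric embedding $L^q(\tilde\mu)\hookrightarrow L^q(\sigma)$, $h\mapsto(h\circ\varphi)\mathbf 1_{\Sd\setminus E}$, hence $A_\varphi$ is compact. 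Hölder with exponents $p/q$ and $s$ yields $\|B_\varphi f\|_q\le\|f\|_p\|F_\varphi\|_s^{1/q}$. Applying $P_q$ and using $P_q\iota=\mathrm{Id}_{H^q}$ we obtain $C_\varphi=P_q A_\varphi+P_q B_\varphi$ with $P_q A_\varphi$ compact, so $\|C_\varphi\|_{e,p\to q}\le\|P_q B_\varphi\|_{p\to q}\le\|P_q\|\,\|F_\varphi\|_s^{1/q}$.

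For the alternative bound $\|C_\varphi\|_{e,p\to q}\le 2\|F_\varphi\|_s^{1/q}$ (covering also $q=1$), approximate by the compact operators $C_{\varphi_r}=C_\varphi K_r$, where $K_r f(z)=f(rz)$ and $\varphi_r=r\varphi$; compactness is immediate from $\varphi_r(\Bd)\subset r\overline{\Bd}\subset\Bd$. Splitting $\|(C_\varphi-C_{\varphi_r})f\|_q^q$ over $\Sd\setminus E$ and $E$, the first piece equals $\|J_{\tilde\mu}(I-K_r)f\|_{L^q(\tilde\mu)}^q$; for the second, change of variables turns it into $\int_\Sd|f^*-f(r\cdot)|^q F_\varphi\,d\sigma$, and Minkowski in $L^q(F_\varphi d\sigma)$ together with Hölder (using $\|f(r\cdot)\|_{L^p(\sigma)}\le\|f\|_{H^p}$, which is the definition of the $H^p$-norm) bounds this by $(2\|f\|_p\|F_\varphi\|_s^{1/q})^q$. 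The subadditivity $(A+B)^{1/q}\le A^{1/q}+B^{1/q}$ therefore gives
$$\|C_\varphi-C_{\varphi_r}\|_{p\to q}\le\|J_{\tilde\mu}(I-K_r)\|_{p\to L^q(\tilde\mu)}+2\|F_\varphi\|_s^{1/q}.$$
The main obstacle is proving the first term tends to $0$ as $r\to 1$ \emph{uniformly} in $f$, since $K_r\to I$ is only in the strong operator topology. This is where $p>1$ enters via duality: by Schauder $J_{\tilde\mu}^*$ is compact, and $K_r$ is formally self-adjoint under the Hardy pairing, so $(I-K_r)^*=I-K_r$ on $H^{p^*}$ converges strongly to $0$ (as $p^*<\infty$); the compactness argument of Lemma~\ref{lem:generalargument}(a) applied with $K=J_{\tilde\mu}^*$ and $\mathcal R_r=(I-K_r)^*$ gives $\|J_{\tilde\mu}(I-K_r)\|=\|(I-K_r)^*J_{\tilde\mu}^*\|\to 0$. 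Taking $\liminf_{r\to 1}$ yields $\|C_\varphi\|_{e,p\to q}\le 2\|F_\varphi\|_s^{1/q}$, which combined with the Szegö estimate proves the theorem.
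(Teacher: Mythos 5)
Your proof is correct, and it follows the paper's overall architecture: reduce to the inclusion operator of the pullback measure $\mu_\varphi=\tilde\mu+F_\varphi\,d\sigma$, obtain the lower bound from inner functions and Lemma \ref{lem:superinner}, the $\|P_q\|$ bound from the Szegö projection, and the constant $2$ from the dilation approximants $C_\varphi K_r$ (the paper's $\mathcal Q_n$ is exactly your $K_{1-1/n}$). Two steps are executed genuinely differently, and both are sound. For the $\|P_q\|$ bound, the paper proves the two-sided comparison $\|C_\varphi\|_{e,p\to q}\asymp\|J_{\mu_\varphi}\|_{e,p\to q}$ via the transfer operators $W_q,R_q$ and a dual operator $V_q$, and then quotes Theorem \ref{thm:inclusion}; your direct decomposition $\iota C_\varphi=A_\varphi+B_\varphi$, with $A_\varphi$ compact because it factors isometrically through $J_{\tilde\mu}$ and $\|B_\varphi\|_{p\to q}\le\|F_\varphi\|_s^{1/q}$ by Hölder, followed by applying $P_q$, is shorter and avoids $V_q$ entirely. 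For the constant $2$, the paper controls the interior term by splitting $\Bd$ into a compact core (Cauchy estimates) and an outer annulus (maximal function plus Hölder, which is where $p>1$ enters); you instead use compactness of $J_{\tilde\mu}$ (Theorem \ref{thm:compactinclusion}) and pass to adjoints, exploiting that $K_r$ is self-adjoint for the $H^2$ pairing and that $(H^p)^*\simeq H^{p^*}$ with $p^*<\infty$, so that $\|J_{\tilde\mu}(I-K_r)\|=\|(I-K_r)^*J_{\tilde\mu}^*\|\to0$ by the mechanism of Lemma \ref{lem:generalargument}(a). This is a clean alternative; the only caveat worth recording is that the duality $(H^p)^*\simeq H^{p^*}$ itself rests on the boundedness of the Szegö projection on $L^{p^*}$, so your ``constant $2$'' route is not projection-free the way the paper's is --- it merely replaces $P_q$ (possibly unbounded when $q=1$) by $P_{p^*}$, which is harmless since $p>1$ always holds here. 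Finally, your lower bound reproves by hand, via $g_k=I^kg$ and (H1)--(H2), the inequality $\|C_\varphi\|_{e,p\to q}\ge\|M_EC_\varphi\|_{p\to q}$ that the paper cites from \cite{GorMacCluer04}; this is exactly the argument of the paper's Theorem \ref{thm:inclusion} and is correctly justified.
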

In particular, for $q=2,$ we get $\|C_\varphi\|_{e,p\to q}=\|F_\varphi\|_s^{1/2}.$
\begin{proof}
 If $\sigma(E)=0$, then $C_\varphi$ is compact by \cite[Corollary 2]{GorMacCluer04} and there is nothing to prove. Therefore we will assume $\sigma(E)>0$.
 Let $\mu_\varphi=\sigma\circ(\varphi^*)^{-1}$ be the pullback measure of $\sigma$ by $\varphi^*$, which is a measure on  $\overline{\Bd}$. Its restriction to $\Sd$ is
 $F_\varphi d\sigma.$
 The change of variables formula shows that, for any $f\in H^p(\Bd)$ 
 $$\|C_\varphi(f)\|_q=\|J_{\mu_\varphi}(f)\|_{L^q(\mu_\varphi)}.$$
 However, without any extra work, this does not rely directly $\|C_\varphi\|_{e,p\to q}$ to $\|J_{\mu_\varphi}\|_{e,p\to q}.$
 We first give a proof working for $q>1$ (recall that the Szegö projection is bounded if and only if $q>1$).
 
 Let us introduce
 $$\begin{array}{rcllrcl}
    W_q:L^q(\mu_\varphi)&\to&L^q(\sigma)&\quad\quad\quad&R_q:H^q(\Bd)&\to&L^q(\sigma)\\
    g&\mapsto&g\circ\varphi^*&&f&\mapsto&f^*.
   \end{array}
   $$
The maps $W_q$ and $R_q$ are both isometries and from $(f\circ\varphi)^*=f\circ \varphi^*,$ we deduce that
$$R_q\circ C_\varphi=W_q\circ J_{\mu_\varphi}:H^p(\Bd)\to L^q(\sigma).$$
Observe also that $P_qR_q=\textrm{Id}_{H^q}$. Let 
finally $K:H^p(\Bd)\to L^q(\mu_\varphi)$ be a compact operator. Then
\begin{align*}
 \|J_{\mu_\varphi}-K\|_{p\to q}&\geq \|W_q J_{\mu_\varphi}-W_q K\|_{p\to q}\\
 &\geq \|R_q C_\varphi-W_q K\|_{p\to q}\\
 &\geq \|P_q\|^{-1}\|C_\varphi-PW_qK\|_{p\to q}\\
 &\geq \|P_q\|^{-1} \|C_\varphi\|_{e,p\to q}
\end{align*}
which shows that $\|C_\varphi\|_{e,p\to q}\leq \|P_q\|\cdot \|J_{\mu_\varphi}\|_{e,p\to q}.$ Conversely, let us define
$V_q:L^q(\sigma)\to L^q(\mu_\varphi)$ by duality: for $f\in L^q(\sigma)$ and
$g\in L^{q^*}(\mu_\varphi)$, 
$$\int_{\overline{\Bd}}V_q f \bar gd\mu_{\varphi}=\int_{\Sd}f \overline{W_{q^*}g}d\sigma.$$
In particular, $\|V_q\|\leq 1$ since $W_{q^*}$ is an isometry.
Observe also that $V_qW_q=\textrm{Id}_{L^q(\mu_\varphi)}$ since for $(f,g)\in L^q(\mu_\varphi)\times L^{q^*}(\mu_\varphi),$
\begin{align*}
 \int_{\overline{\Bd}}V_q W_q(f)\bar gd\mu_{\varphi}&=\int_{\Sd}W_q(f)\overline{W_{q^*}(g)}d\sigma\\
 &=\int_{\overline{\Bd}}f\bar gd\mu_{\varphi}
\end{align*}
by the change of variables formula. Now, let $K:H^p(\Bd)\to H^q(\Bd)$ be a compact operator. Then
\begin{align*}
 \|C_\varphi-K\|_{p\to q}&\geq \|R_q C_\varphi-R_q K\|_{p\to q}\\
 &\geq \|W_q J_{\mu_\varphi}-R_q K\|_{p\to q}\\
 &\geq \|J_{\mu_\varphi}-V_q R_q K\|_{p\to q}\\
 &\geq \|J_{\mu_\varphi}\|_{e,p\to q}
\end{align*}
which shows the reverse inequality, $\|J_{\mu_\varphi}\|_{e,p\to q}\leq \|C_\varphi\|_{e,p\to q}.$ Now we conclude because $F_{\mu_\varphi}=F_\varphi$ 
by definition.

We now prove the upper inequality for all values of $q\geq 1$ and for the constant $2$. 
Let $n\geq 1$ and let $\mathcal Q_n:H^p\to H^p,\ f\mapsto f\left(\left(1-\frac 1n\right)\cdot\right).$ Then $\mathcal Q_n$ is a compact operator
with norm less than $1$. Let $\mathcal R_n=I-\mathcal Q_n$, $\|\mathcal R_n\|\leq 2.$ Then for all $n\geq 1,$
\begin{align*}
 \|C_\varphi\|_{e,p\to q}&\leq \liminf_n  \|C_\varphi \mathcal R_n\|_{p\to q}.
\end{align*}
Let $f\in H^p(\Bd)$ with $\|f\|\leq 1$ and let $r\in (0,1).$
\begin{align*}
 \|C_\varphi \mathcal R_n(f)\|^q&=\int_{\Sd}|\mathcal R_n(f)\circ \varphi|^qd\sigma\\
 &=\int_{r\overline{\Bd}}|\mathcal R_n(f)|^qd\mu_{\varphi}+\int_{\overline{\Bd}\backslash r\overline{\Bd}}|\mathcal R_n(f)|^q d\mu_{\varphi}\\
 &=: I_{1,n}(r)+I_{2,n}(r).
\end{align*}
By Cauchy integral formula and by \cite[Theorem 4.17]{Zhu05}, for any $z\in r\overline{\mathbb B_d},$
\begin{align*}
 |\mathcal R_n(f)(z)|&\leq\frac 1n \sup_{w\in r\overline{\Bd}}|f'(w)|\\
 &\leq \frac{C(r,d)}n \|f\|_p
\end{align*}
so that $I_{1,n}(r)\leq C(\varphi,r,d)/n^q$ where $C(\varphi,r,d)$ only depends on $\varphi,$ $r$ and $d$.

Let us turn to $I_{2,n}(r)$ and let us denote by $\mu_r$ the restriction 
of $\mu_\varphi$ to $\overline{\Bd}\backslash \Bd.$ Then by combining Pau's argument (see inequality (\ref{eq:pau})) and the proof of Proposition \ref{prop:inclusionbord}, we get that
\begin{align*}
 I_{2,n}(r)&=\int_{\overline{\Bd}} |\mathcal R_n(f)|^q d\mu_r\\
 &\leq \left(\|F_{\varphi}\|_s +A(p)\|\widehat{\mu_r}\|_s\right)\|\mathcal R_n(f)\|_p^q\\
 &\leq 2^q \|F_\varphi\|_s+2^qA(p)\|\widehat \mu_r\|_s
\end{align*}
where $A(p)$ only depends on $p$.
But as in the proof of Theorem \ref{thm:compactinclusion}, we get that $\|\widehat{\mu_r}\|_s\to 0$ as $r\to 1.$
Putting everything together, we finally get $\liminf_{n\to+\infty} \|C_\varphi \mathcal R_n\|_{p\to q}\leq 2\|F_{\varphi}\|_s^{1/q},$
which achieves the proof of the upper estimate.

We conclude by providing a proof for the lower estimate. Let $M_E$ be the operator of multiplication by ${\mathbf 1}_E$ from $H^q(\Bd)$ to $L^q(\sigma)$. It is shown in 
\cite{GorMacCluer04} that 
$$\|C_\varphi\|_{e,p\to q}\geq \|M_E C_\varphi\|_{p\to q}.$$
Now, 
\begin{align*}
\|M_E C_\varphi\|_{p\to q}^q &=\sup_{g\in B_{H^p}} \int_E |g\circ\varphi|^q d\sigma\\
&=\sup_{g\in B_{H^p}} \int_{\Sd}|g|^q d\mu_{\varphi}\\
&=\sup_{g\in B_{H^p}}\int_{\Sd}|g|^q F_\varphi d\sigma\\
&=\|F_\varphi\|_s
\end{align*}
by Lemma \ref{lem:superinner}.
\end{proof}

\subsection{Weighted composition operators} \label{sec:wcoh2}
Without extra-work, we can also give an estimate of the essential norm of weighted
composition operators. Let $u:\Bd\to\CC$ and $\varphi:\Bd\to\Bd$ be holomorphic.
Then the weighted composition operator $uC_\varphi$ is defined by $(uC_\varphi)(f)=u\cdot (f\circ \varphi)$. Again, we assume that $C_\varphi$ induces a bounded 
operator on $H^p(\Bd)$.  If $\sigma(E)>0,$ then $\varphi^*_{|E}$ induces a 
nonsingular transformation from $(E,\sigma)$ into $(\varphi^*(E),\sigma).$ Let $F_{u,\varphi}$ be the Radon-Nikodym derivative
of $|u|^q \sigma_{|E}\circ(\varphi^*)^{-1}_{|\varphi^*(E)}$ with respect to $\sigma_{|\varphi^*(E)}.$

\begin{corollary}
 Let $1\leq q<p$ and set $s=p/(p-q)$. For all analytic maps $\varphi:\Bd\to\Bd$ and $u:\Bd\to\CC^d$ such that  $C_\varphi:H^p(\Bd)\to H^p(\Bd)$ and $uC_\varphi:H^p(\Bd)\to H^q(\Bd)$ are bounded,
 $$\|F_{u,\varphi}\|_s^{1/q}\leq \|uC_\varphi\|_{e,p\to q}\leq \min(2,\|P_q\|)\|F_{u,\varphi}\|_s^{1/q}$$
 where $P_q:L^q(\sigma)\to H^q(\Bd)$ is the Szegö projection.
\end{corollary}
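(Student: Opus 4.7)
The plan is to reduce the weighted composition operator to the inclusion-operator setting, running the argument of Theorem \ref{thm:compoball} essentially verbatim with a weighted pullback. The first step is to introduce the pullback measure $\mu_{u,\varphi}$ on $\overline{\Bd}$ defined by
$$\int_{\overline{\Bd}} g\, d\mu_{u,\varphi} = \int_{\Sd} g(\varphi^*(\xi)) |u^*(\xi)|^q d\sigma(\xi)$$
(where $u^*\in L^q(\sigma)$ exists because $uC_\varphi\mathbf 1=u$ lies in $H^q$), so that a change of variables immediately gives $\|uC_\varphi f\|_{H^q}=\|J_{\mu_{u,\varphi}}f\|_{L^q(\mu_{u,\varphi})}$ for every $f\in H^p(\Bd)$. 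Unpacking the definition of $F_{u,\varphi}$ then shows that the restriction of $\mu_{u,\varphi}$ to $\Sd$ is precisely $F_{u,\varphi}d\sigma$, and the assumed boundedness of $uC_\varphi$ together with Proposition \ref{prop:inclusionbord} ensures both $\widehat{\mu_{u,\varphi}}\in L^s(\sigma)$ and $F_{u,\varphi}\in L^s(\sigma)$.

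For the lower bound I would repeat the inner function argument at the end of the proof of Theorem \ref{thm:compoball}: fix an inner function $I$ on $\Bd$ with $I(0)=0$ and, for $g\in B_{H^p(\Bd)}$, set $g_k=I^kg$. The sequence $(g_k)$ is weakly null in $H^p$ (recall $p>1$) with norm bounded by $\|g\|_p$, and $|I\circ\varphi^*|=1$ almost everywhere on $E$. For any compact $K:H^p\to H^q$ one has $\|Kg_k\|_q\to 0$, hence
$$\|uC_\varphi\|_{e,p\to q}^q\geq\limsup_k\|uC_\varphi g_k\|_q^q\geq\int_E |u|^q|g\circ\varphi^*|^q d\sigma=\int_{\Sd}F_{u,\varphi}|g|^q d\sigma$$
by the change of variables defining $F_{u,\varphi}$; optimising over $g$ via Lemma \ref{lem:superinner} then gives $\|uC_\varphi\|_{e,p\to q}\geq\|F_{u,\varphi}\|_s^{1/q}$.

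For the upper bound with constant $\|P_q\|$ (available when $q>1$), I would introduce the map $W_q:L^q(\mu_{u,\varphi})\to L^q(\sigma)$, $W_q(g)=(g\circ\varphi^*)\cdot u^*$; its isometric character is immediate from the defining formula of $\mu_{u,\varphi}$. The identity $R_q\circ(uC_\varphi)=W_q\circ J_{\mu_{u,\varphi}}$ combined with $P_qR_q=\mathrm{Id}_{H^q}$ then transfers compactness: for every compact $K:H^p\to L^q(\mu_{u,\varphi})$, the operator $P_qW_qK$ is a compact map $H^p\to H^q$ and
$$\|uC_\varphi-P_qW_qK\|_{p\to q}\leq\|P_q\|\cdot\|J_{\mu_{u,\varphi}}-K\|_{p\to q},$$
so taking the infimum over $K$ and invoking Theorem \ref{thm:inclusion} produces the bound. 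For the universal constant $2$ (which also covers $q=1$), I would apply Lemma \ref{lem:generalargument}(b) with the compact dilations $\mathcal Q_n f(z)=f((1-1/n)z)$ and $\mathcal R_n=I-\mathcal Q_n$, express $\|uC_\varphi\mathcal R_n f\|_q^q$ as an integral against $\mu_{u,\varphi}$, and split at radius $r$; this reduces matters exactly to the two estimates already performed in Theorem \ref{thm:compoball}, namely a Cauchy-formula bound for the interior part and a Pau-type bound of the form $2^q\|F_{u,\varphi}\|_s+2^qA(p)\|\widehat{\mu_{u,\varphi,r}}\|_s$ for the boundary part, with $\|\widehat{\mu_{u,\varphi,r}}\|_s\to 0$ as $r\to 1$ via the dominated convergence argument of Theorem \ref{thm:compactinclusion}. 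The only genuinely new piece beyond bookkeeping is the isometry verification for the weighted $W_q$ and the identification of the restriction of $\mu_{u,\varphi}$ to $\Sd$ as $F_{u,\varphi}d\sigma$; these are expected to be the sole technical points, with no new analytic obstacle.
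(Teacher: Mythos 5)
Your proposal is correct and follows essentially the same route as the paper: the weighted pullback measure $\mu_{u,\varphi}=(|u|^q\sigma)\circ(\varphi^*)^{-1}$, the transfer to $J_{\mu_{u,\varphi}}$ via the (weighted) isometry $W_q$ together with $P_qR_q=\mathrm{Id}$, the dilation operators $\mathcal R_n$ for the constant $2$, and the inner-function sequence $I^kg$ combined with Lemma \ref{lem:superinner} for the lower bound. You have in fact spelled out more detail than the paper (which simply says the upper estimate ``follows exactly'' the unweighted case); the only point worth flagging is that the a.e.\ identity $|I^k\circ\varphi^*|=1$ on $E$ quietly uses hypotheses (H1) and (H2), as the paper notes.
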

\begin{proof}
 Let $\mu_\varphi=(|u|^q \sigma)\circ(\varphi^*)^{-1}$ be the pullback measure of $|u|^q d\sigma$ by $\varphi$, which is a measure on  $\overline{\Bd}$. 
 The change of variables formula now writes for any $f\in H^p(\Bd),$ 
 $$\|C_\varphi(f)\|_q=\|J_{\mu_\varphi}(f)\|_{L^q(\mu_\varphi)}.$$
 The proof of the upper estimate follows exactly that of Theorem \ref{thm:compoball}. For the lower estimate, we can do exactly the same proof provided we show that 
 $$\|uC_\varphi\|_{e,p\to q}\geq \|(\mathbf 1_E u)C_\varphi\|_{p\to q}.$$
 Let $K$ be a compact operator and let $I$ be an inner function on $\Bd$ such that $I(0)=0$. Then for any $f$ in the unit ball of $H^p(\Bd),$
 $$\|uC_\varphi-K\|_{p\to q}\geq \|uC_\varphi(I^n f)\|_q-\|K(I^n f)\|_q.$$
 Since $(I^n f)$ goes weakly to zero, and since 
 \begin{align*}
 \|uC_\varphi(I^n f)\|^q_q&=\int_{\Sd}|u(\xi)|^q |I^n\circ\varphi^*|^q |f\circ\varphi^*|^q d\sigma\\
 &\to \int_ E |u(\xi)|^q |f\circ\varphi^*|^q d\sigma,
 \end{align*}
 we get the result. Observe that the last part of the proof uses (H1) and (H2) 
 to ensure the a.e. convergence of $|I^n\circ\varphi^*|$ to $\mathbf 1_{E}.$
\end{proof}
The most important case in the previous theorem happens when $\varphi(z)=z$. Then $uC_\varphi$ is the multiplication operator $M_u.$ In the setting, one can say more, 
since $\|M_u\|_{e,p\to q}\leq \|M_u\|_{p\to q}=\|u\|_r$ where $r=pq/(p-q).$ 
Since moreover $F_{u,\varphi}=|u|^q,$ we get $\|M_u\|_{e,p\to q}=\|u\|_r.$ The extension of this result to Hardy 
spaces of the polydisc and to Dirichlet series will be the subject of the next section.

\section{Multipliers on spaces of Dirichlet series}

\subsection{Some facts on Hardy spaces of Dirichlet series}
We shall need the following facts on Dirichlet series. We refer to \cite{FGS23} and the references therein for details.
Let $N\geq 1$ and let $D(s)=\sum_{n\geq 1}a_n n^{-s}$ be a Dirichlet series. We denote by $D_N$ the restriction
of $D$ to the first $N$ prime numbers: $D_N(s)=\sum_{\textrm{gpd}(n)\leq p_N}a_nn^{-s}$ where $\textrm{gpd}(n)$
denotes the biggest prime divisor of $n$ and $(p_n)_{n\geq 1}$ is the increasing family of prime numbers. 
Then the map $\mathcal P_N:\mathcal H^p\to\mathcal H^p,$ $D\mapsto D_N$ is a contraction for any $p\in[1,+\infty]$
and when $p\in[1,+\infty),$ $\mathcal P_N(f)\to f$ in $\mathcal H^p$ as $N\to+\infty.$ If $p=+\infty,$ the convergence holds in the weak-star topology and it is still true that 
$\|\mathcal P_Nf\|_\infty\to \|f\|_\infty$ as $N\to+\infty$ (see \cite[Chapter 5]{Defantbook}).
In the following, we will set $\mathcal H^p_N=\mathcal P_N(\mathcal H^p)$.

Hardy spaces of the infinite polytorus and of Dirichlet series are linked by the Bohr point of view. Let $f(s)=\sum_{n=1}^N a_n n^{-s}$
be a Dirichlet polynomial. Any integer $n$ factorizes as $n=p_1^{\alpha_1}\cdots p_r^{\alpha_r}$. We define the Bohr lift of $f$ by
$$\mathcal L(f)=\sum_{n=1}^N a_n z^{\alpha(n)}$$
where $\alpha(n)=(\alpha_1,\dots,\alpha_r,0,\dots)$ provided $n=p_1^{\alpha_1}\cdots p_r^{\alpha_r}.$
Then $\mathcal L$ induces an isometric isomorphism between $\mathcal H^p$ and $H^p(\TT^\infty)$ for all $p\in[1,+\infty].$
Its inverse will be denoted by $\mathcal B$ and will be called the Bohr transform. Observe that $\mathcal L$ induces 
an isometric isomorphism between $\mathcal H^p_N$ and $H^p(\TT^N)$.

\subsection{Essential norm of multipliers}
This subsection is devoted to the proof of Theorem \ref{thm:dirichletseries}.
\begin{proof}[Proof of Theorem \ref{thm:dirichletseries}, part (a)]
 By \cite[Theorem 9]{FGS23} we only need to prove the lower bound. Let $K:\mathcal H^p\to\mathcal H^q$ be a compact operator, 
 let $N\geq 1$ and let $\mathcal P_N:\mathcal H^p\to\mathcal H^p_N$ be the canonical projection. We set $D_N=\mathcal P_N(D)$
 and $K_N=\mathcal P_N K\mathcal P_N$. Then $D_N$ induces a multiplier $M_{D_N}:\mathcal H^p_N\to \mathcal H^q_N,$
 $K_N$ is a compact operator from $\mathcal H^p_N$ to $\mathcal H^q_N$ and
 \begin{align*}
  \|M_D-K\|_{p\to q}&\geq \|\mathcal P_NM_D\mathcal P_N-\mathcal P_N K\mathcal P_N\|_{p\to q}\\
  &=\|M_{D_N}-K_N\|_{p\to q}.
 \end{align*}
We move to the polydisc $\TT^N$ by considering $F_N=\mathcal L(D_N)$ and we still denote $K_N=\mathcal L\circ K_N\circ \mathcal B$. 
We intend to show that 
$$\|M_{F_N}-K_N\|_{p\to q}\geq \|F_N\|_{H^r(\TT^N)}=\|D_N\|_r.$$
Letting $N$ to $+\infty$ will yield the result, since $\|D_N\|_r\to \|D\|_r.$

We set $t=q/(p-q)$ and $G=|F_N|^t.$ Then $G\in L^p(\TT^N)$ and $\|G\|_p^p=\|F_N\|_r^r.$
There exists a sequence of trigonometric polynomials $(Q_n)$ such that $\|Q_n-G\|_p\to 0$
and $Q_n\to G$ a.e. on $\TT^N.$
For a fixed $n\geq 1,$ let $P_n=\prod_{j=1}^N z_j^d$ where $d\geq 0$ is sufficiently large so that
$P_nQ_n\in H^p(\TT^N)$ and let for $k\geq 1$ $E_{k,n}=z_1^k P_nQ_n$. Then $E_{k,n}$ belongs to $H^p(\TT^N)$,
$|E_{k,n}|=|Q_n|$ on $\TT^N$ and $E_{k,n}(z)\to 0$ as $k\to+\infty$ for any $z\in \DD^N.$
Therefore, by \cite[Lemma 13]{FGS23}, $(E_{k,n})_k$ converges to $0$ in the weak-star topology
of $H^p(\TT^N)$, therefore in its weak topology since $H^p$ is reflexive.
Now,
\begin{align*}
 \|M_{F_N}(E_{k,n})\|_q&=\left(\int_{\TT^N} |Q_n|^q |F_N|^q\right)^{1/q}\\
 &\geq \left(\int_{\TT^N} |G|^q |F_N|^q\right)^{1/q}-\left(\int_{\TT^N} |Q_n-G|^q |F_N|^q\right)^{1/q}.
\end{align*}
Therefore,
\begin{align*}
 \|M_{F_N}-K_N\|_{p\to q}&\geq \limsup_{k\to+\infty} \frac{\|M_{F_N}(E_{k,n})-K_N(E_{k,n})\|_q}{\|E_{k,n}\|_p}\\
 &\geq \frac 1{\|Q_n\|_p}\left(\left(\int_{\TT^N} |G|^q |F_N|^q\right)^{1/q}-\left(\int_{\TT^N} |Q_n-G|^q |F_N|^q\right)^{1/q}\right)\\
 &\geq \frac 1{\|Q_n\|_p}\left( \|F_N\|_r^{r/q}-\|Q_n-G\|_p^{1/q}\|F_N\|_r^{1/q}\right)
\end{align*}
by Hölder's inequality applied to the pair of conjugated exponents $p/q$ and $p/(p-q)$.
We let $n$ to $+\infty$ to get 
$$\|M_{F_N}-K_N\|_{p\to q}\geq \frac{\|F_N\|_r^{r/q}}{\|F_N\|_r^t}=\|F_N\|_r.$$
\end{proof}

\begin{remark}
Observe that the above proof is based on two arguments similar to  those introduced in the previous sections: we use that we can compute the norm of an element in $L^r(\TT^N)$
using only functions in $B_{H^p}$ and we use the existence of inner functions
on the polydisc to get a sequence going weakly to zero with prescribed modulus at the
distinguished boundary. Here Fourier analysis arguments simplify the proofs. 
\end{remark}

\begin{proof}[Proof of Theorem \ref{thm:dirichletseries}, part (b)]
 Arguing as above, it is sufficient to prove that, for each $N\geq 1,$ for each $F\in H^\infty(\TT^N)$, $F\neq 0,$ $\|M_F\|_{e,1\to 1}\geq \|F\|_\infty.$
 The main difficulty we are facing is that $\mathcal H^1$ is no longer reflexive and it is more difficult to exhibit sequences converging
 weakly to $0$. Our strategy (inspired by \cite{Voi22}) will be, given $\veps>0$, to construct a bounded sequence $(R_n)$
 in $H^1(\TT^N)$ so that, for all $m>n,$ $\int_{\TT^N} |F|\cdot |R_n-R_m|\geq (\|F\|_\infty-\veps)\|R_n-R_m\|_1.$
 This construction will be achieved by regularizing functions peaking around $\{z\in\TT^N:\ |F(z)|\geq \|F\|_\infty-\veps\}.$
 
 Thus let $\veps>0$, $\veps<\min(1/4,\|F\|_\infty)$ and let us denote by $\mu$ the Haar measure on $\TT^N$.
 There exists a decreasing sequence of measurable subsets $(A_n)$ of $\TT^N$ such that 
 $$\left\{
 \begin{array}{ll}
  |F(x)|\geq \|F\|_\infty-\veps&\textrm{for all }x\in A_n\\
  \mu(A_{n+1})\leq \frac 14\mu(A_n).
 \end{array}
 \right.$$
 If we take the convolution product of the nonnegative functions $\frac 1{\mu(A_n)}\mathbf 1_{A_n}$ with the Féjer kernel,
 we get for each $n\geq 1$ a sequence of trigonometric polynomials $(G_{n,k})_k$ such that 
 $$G_{n,k}\xrightarrow{k\to+\infty} \frac1{\mu(A_n)}\mathbf 1_{A_n}\ \textrm{a.e.}$$
 $$\forall n,k\geq 1,\ 0\leq G_{n,k}\leq \frac1{\mu(A_n)}$$
 $$\forall n,k\geq 1,\ \|G_{n,k}\|_1\leq 1.$$
Using Egorov's theorem, we obtain for each $n\geq 1$ a trigonometric polynomial $Q_n$ and a measurable set $B_n\subset\TT^N$ such that 
$$\mu(\TT^N\backslash B_n)\leq \veps\mu(A_n)$$
$$\left|Q_n-\frac1{\mu(A_n)}\mathbf 1_{A_n}\right|\leq \veps\textrm{ on }B_n$$
$$0\leq Q_n\leq \frac1{\mu(A_n)}.$$
$$\|Q_n\|_1\leq 1$$
We then multiply $Q_n$ by some unimodular polynomial $P_n=\prod_{j=1}^N z_j^d$ to get a holomorphic polynomial $R_n$ with the same modulus as $Q_n$. 
We claim that the following fact is true.

\noindent{\bf Fact.} For any $m>n\geq 1,$ 
$$\int_{\TT^N\backslash A_n}|R_n-R_m|<4\veps\textrm{ and }\int_{A_n}|R_n-R_m|\geq \frac 18.$$

Let us admit the fact for a while to achieve the proof of Theorem \ref{thm:dirichletseries}. The sequence $(R_n)$ is a bounded
sequence of $H^1(\TT^N)$. Let $K:H^1(\TT^N)\to  H^1(\TT^N)$ be compact. Extracting if necessary, we may assume that $(K(R_n))$ converges.
Let $m>n$ be such that $\|KR_m-KR_n\|\leq\veps.$
Then 
\begin{align*}
 \|(M_F-K)(R_n-R_m)\|_1&\geq \|M_F(R_n-R_m)\|_1-\veps\\
 &\geq \int_{A_n}|F|\cdot |R_n-R_m|-\veps\\
 &\geq (\|F\|_\infty-\veps)\int_{A_n}|R_n-R_m|-\veps.
\end{align*}
By the fact,
$$\int_{\TT^N}|R_n-R_m|\leq \int_{A_n}|R_n-R_m|+\int_{\TT^N\backslash A_n}|R_n-R_n|\leq (1+32\veps)\int_{A_n}|R_n-R_m|$$
so that
$$\|(M_F-K)(R_n-R_m)\|_1\geq \frac{\|F\|_\infty-\veps}{1+32\veps}\|R_n-R_m\|_1-8\veps\|R_n-R_m\|_1.$$
Since $\veps>0$ is arbitrary, we get $\|M_F\|_{e,1\to 1}\geq \|F\|_\infty.$

\medskip

It remains to prove the fact. We first observe that 
$$\int_{A_n} |R_n-R_m|\geq \int_{A_n\cap B_n\cap B_m\backslash A_m}|P_nQ_n-P_mQ_m|.$$
Now, provided $z\in A_n\cap B_n\cap B_m\backslash A_m,$
$$|P_nQ_n(z)|\geq\frac1{\mu(A_n)}-\veps\textrm{ and }|P_mQ_m(z)|\leq \veps$$
so that 
\begin{align*}
 \int_{A_n}|R_n-R_m|&\geq \mu(A_n\cap B_n\cap B_m\backslash A_m)\left(\frac 1{\mu(A_n)}-2\veps\right)\\
 &\geq \big( \mu(A_n\backslash A_m)-\mu(\TT^N\backslash B_n)-\mu(\TT^N\backslash B_m)\big) \left(\frac 1{\mu(A_n)}-2\veps\right)\\
 &\geq \frac 14\mu(A_n)\cdot\left(\frac1{\mu(A_n)}-2\veps\right)\\
 &\geq \frac 14-\frac{\veps\mu(A_n)}2\geq \frac 18 
\end{align*}
since $\veps<1/4.$ Furthermore,
\begin{align*}
\int_{\TT^N\backslash A_n}|R_n-R_m|&\leq \int_{\TT^N\backslash A_n}|R_n|+\int_{\TT^N \backslash A_n}|R_m|\\
&\leq \int_{\TT^N\backslash A_n}|R_n|+\int_{\TT^N \backslash A_m}|R_m|.
\end{align*}
We just need to study
\begin{align*}
 \int_{\TT^N\backslash A_n}|R_n|&\leq \int_{(\TT^N\cap B_n)\backslash A_n}|R_n|+\int_{\TT^N \backslash B_n}|R_n|\\
 &\leq \veps+\mu(\TT^N\backslash B_n)\times\frac1{\mu(A_n)}\\
 &\leq 2\veps.
\end{align*}
\end{proof}
A corollary of our proof is the following result.
\begin{corollary}
 Let $N\in\mathbb N\cup\{+\infty\}$, let $1\leq q<p$ and let $u\in H^r(\TT^N)$ with $r=pq/(p-q)$. Then $\|M_u\|_{e,p\to q}=\|u\|_r.$
\end{corollary}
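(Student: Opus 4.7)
The plan is to derive the upper bound from a routine Hölder inequality, recycle the argument of Theorem \ref{thm:dirichletseries}(a) verbatim for the lower bound when $N$ is finite, and reduce the case $N=\infty$ to the finite-polydisc case by truncation.

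The upper bound is immediate: Hölder's inequality applied with the conjugate exponents $p/q$ and $r/q$ (they are conjugate because $1/r+1/p=1/q$) gives $\|uf\|_q\leq \|u\|_r\|f\|_p$ for every $f\in H^p(\TT^N)$, so $\|M_u\|_{p\to q}\leq \|u\|_r$ and a fortiori $\|M_u\|_{e,p\to q}\leq \|u\|_r$.

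For the lower bound with $N<\infty$, I would copy the middle portion of the proof of Theorem \ref{thm:dirichletseries}(a), replacing $F_N$ by $u$. Setting $t=q/(p-q)$ and $G=|u|^t$, one has $G\in L^p(\TT^N)$ with $\|G\|_p^p=\|u\|_r^r$; approximating $G$ in $L^p$ by trigonometric polynomials $Q_n$ and forming $E_{k,n}=z_1^k\bigl(\prod_{j=1}^N z_j^d\bigr)Q_n$ for $d$ large yields a bounded sequence in $H^p(\TT^N)$, satisfying $|E_{k,n}|=|Q_n|$ on $\TT^N$ and $E_{k,n}\to 0$ weakly as $k\to\infty$ by \cite[Lemma 13]{FGS23}. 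The identical Hölder computation then produces $\|M_u\|_{e,p\to q}\geq \|u\|_r$; note that the argument uses $u\in H^r$ only to secure $G\in L^p$, and nowhere requires $u\in H^p$.

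For $N=\infty$ I pass to $\TT^N$ by truncation. Let $\mathcal P_N:H^p(\TT^\infty)\to H^p(\TT^\infty)$ denote the projection onto Fourier multi-indices supported on the first $N$ coordinates, a contraction for every $p\in[1,\infty]$. Writing $u=u_N+u_N^c$ and $f=f_N+f_N^c$ with $u,f\in H^p(\TT^\infty)$, each of the three cross products carries a strictly positive exponent on some coordinate beyond $N$---this is where I use that Hardy functions have all exponents nonnegative, so no cancellation is possible---and therefore $\mathcal P_N$ annihilates them, yielding $\mathcal P_N M_u\mathcal P_N=M_{u_N}\mathcal P_N$. For any compact $K:H^p(\TT^\infty)\to H^q(\TT^\infty)$, setting $\tilde K_N:=\mathcal P_N K|_{H^p(\TT^N)}$ (which is compact from $H^p(\TT^N)$ to $H^q(\TT^N)$) and using the contractivity of $\mathcal P_N$ on $H^q$ gives
$$\|M_u-K\|_{p\to q}\geq \|M_{u_N}-\tilde K_N\|_{H^p(\TT^N)\to H^q(\TT^N)}.$$
Taking the infimum over $K$ and applying the finite-$N$ case yields $\|M_u\|_{e,p\to q}\geq \|u_N\|_r$, and letting $N\to\infty$ with $u_N\to u$ in $H^r(\TT^\infty)$ (valid since $r<\infty$) completes the proof. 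The only substantive step beyond reusing the part (a) argument is the algebraic identity $\mathcal P_N(uf)=u_Nf_N$, which fails for generic $L^p$-functions and hinges crucially on the one-sided spectral condition defining the Hardy space.
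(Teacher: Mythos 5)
Your proof is correct and follows essentially the same route as the paper, which obtains this corollary directly from the proof of Theorem \ref{thm:dirichletseries}(a): the finite-$N$ lower bound is exactly the paper's core argument on $\TT^N$ (the functions $E_{k,n}=z_1^kP_nQ_n$ with $G=|u|^{q/(p-q)}$), and your truncation step for $N=\infty$, resting on the identity $\mathcal P_N M_u\mathcal P_N=M_{u_N}\mathcal P_N$ valid because Hardy functions have nonnegative spectrum, is the same reduction the paper performs in the Dirichlet-series language via the Bohr lift. Nothing to add.
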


It remains one case studied in \cite{FGS23} where we are not able to give a formula
for the essential norm: for $q\geq 1$ and $D\in\mathcal H^q,$ it is shown in \cite{FGS23} that 
$$\frac 12\|D\|_q\leq \|M_D\|_{e,\infty\to q}\leq \|D\|_q.$$
We can at least improve this for $q=2.$
\begin{proposition}\label{prop:H2}
Let $D\in\mathcal H^2.$ Then $\|M_D\|_{e,\infty\to 2}=\|D\|_2.$
\end{proposition}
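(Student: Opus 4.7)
The upper bound $\|M_D\|_{e,\infty\to 2}\leq \|D\|_2$ is immediate once one notices that $\|M_D\|_{\infty\to 2}=\|D\|_2$: the inequality $\|Df\|_2\leq \|f\|_\infty \|D\|_2$ follows from a direct application of the Cauchy--Schwarz inequality (or equivalently, from writing the $\mathcal H^2$-norm as an $L^2$-norm on the infinite polytorus), and equality is attained at $f=1$.

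For the lower bound, the plan is to apply Lemma \ref{lem:generalargument}(a) with $X=\mathcal H^\infty$, $Y=\mathcal H^2$, $T=M_D$. Let $\mathcal R_n:\mathcal H^2\to\mathcal H^2$ denote the orthogonal projection onto the closed subspace spanned by the Dirichlet characters $\{k^{-s}:k>n\}$. Each $\mathcal R_n$ is a contraction, and $(\mathcal R_n)$ converges pointwise to zero on $\mathcal H^2$: indeed, for $g=\sum_{k\geq 1}b_kk^{-s}\in \mathcal H^2$ one has $\|\mathcal R_ng\|_2^2=\sum_{k>n}|b_k|^2\to 0$.

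The key computation is to test $\mathcal R_n M_D$ on the element $f_N=N^{-s}$ with $N>n$. Clearly $\|f_N\|_\infty=1$, and writing $D=\sum_{k\geq 1}a_kk^{-s}$ one obtains
$$M_D(f_N)=D\cdot N^{-s}=\sum_{k\geq 1}a_k(kN)^{-s},$$
whose spectrum is contained in $\{kN:k\geq 1\}\subset(n,+\infty)$. Hence $\mathcal R_nM_D(f_N)=M_D(f_N)$. Since the map $k\mapsto kN$ is injective, the terms $(kN)^{-s}$ are pairwise orthogonal in $\mathcal H^2$, and therefore $\|M_D(f_N)\|_2=\big(\sum_k|a_k|^2\big)^{1/2}=\|D\|_2$. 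Consequently $\|\mathcal R_nM_D\|_{\infty\to 2}\geq \|D\|_2$ for every $n\geq 1$, and Lemma \ref{lem:generalargument}(a) yields $\|M_D\|_{e,\infty\to 2}\geq \|D\|_2$.

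No serious obstacle is expected here: the proof exploits the very specific Hilbertian structure of $\mathcal H^2$ together with the multiplicative action of Dirichlet characters, which conspire to make multiplication by $N^{-s}$ an isometry that shifts the spectrum arbitrarily far to the right. The only novelty compared with part (a) of Theorem \ref{thm:dirichletseries} is that, $\mathcal H^\infty$ being non-reflexive, we avoid weak null sequences in the domain entirely and invoke instead the pointwise null sequence $(\mathcal R_n)$ in the codomain.
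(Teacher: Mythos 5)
Your proof is correct and follows essentially the same route as the paper: both use Lemma \ref{lem:generalargument}(a) with the orthogonal projections $\mathcal R_n=\mathrm{Id}-\mathcal Q_n$ killing the first $n$ Dirichlet characters, and both test on a single character ($N^{-s}$ with $N>n$ in your version, $2^{-ns}$ with $2^n>N$ in the paper's) whose product with $D$ has spectrum beyond $n$ and unchanged $\mathcal H^2$-norm. The only addition is your explicit justification of the upper bound, which the paper simply imports from \cite{FGS23}.
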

\begin{proof}
Let $\mathcal Q_N$ be the orthogonal projection of $\mathcal H^2$ onto $\textrm{span}(1,2^{-2},\cdots,N^{-s})$ and let $\mathcal R_N=\textrm{Id}-\mathcal Q_N$ which has norm $1.$ By Lemma \ref{lem:generalargument}, $\|M_D\|_{e,\infty\to 2}\geq \limsup_{N\to+\infty}\|\mathcal R_N M_D\|.$ Now, let us fix $N\geq 1$
and $n\geq 1$ such that $2^n>N.$ Then
\begin{align*}
\|\mathcal R_N M_D\|_{\infty\to 2}&\geq \|\mathcal R_N M_D(2^{-ns})\|_2 \\
&\geq \|M_D(2^{-ns})\|_2-\|\mathcal Q_N M_D(2^{-ns})\|_2\\
&\geq \|D\|_2
\end{align*}
since $\mathcal Q_N M_D(2^{-ns})=0.$
\end{proof}

\subsection{Spectrum of multipliers}
We end up this section by improving a result of \cite{FGS23} regarding the spectrum of multipliers.
\begin{theorem}\label{thm:spectrum}
 Let $D\in\mathcal H^\infty$ be a non zero Dirichlet series with associated multiplication operator $M_D\in\mathcal L(\mathcal H^p),$
 $p\in[1,+\infty)$. Then $\sigma_c(M_D)\subset \overline{D(\CC_0)}\backslash D(\CC_0)$.
\end{theorem}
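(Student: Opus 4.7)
My plan is to show every $\lambda\in D(\CC_0)$ lies in the residual spectrum $\sigma_r(M_D)$, so in particular not in $\sigma_c(M_D)$. Together with the easy inclusion $\sigma(M_D)\subset\overline{D(\CC_0)}$---for $\lambda\notin\overline{D(\CC_0)}$ the function $D-\lambda$ is bounded below on $\CC_0$, so $1/(D-\lambda)\in\mathcal H^\infty$ inverts $M_{D-\lambda}$---this yields the theorem.

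Fix $\lambda=D(s_0)$ with $s_0\in\CC_0$. If $D\equiv\lambda$ then $\sigma_c(\lambda I)=\emptyset$, so assume $D$ non-constant; then $M_{D-\lambda}$ is injective by uniqueness of Dirichlet series, so $\lambda\notin\sigma_p(M_D)$, and it suffices to exhibit a nonzero $\varphi\in(\mathcal H^p)^*$ with $\varphi((D-\lambda)f)=0$ for every $f\in\mathcal H^p$.

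When $\Re s_0>1/2$ this is immediate: take $\varphi=\delta_{s_0}$, which on $\mathcal H^2$ is represented by the reproducing kernel $K_{s_0}(w)=\zeta(w+\bar s_0)$; this kernel lies in $\mathcal H^\infty$ (hence in $\mathcal H^{p^*}$ for every $p\geq 1$) because $\zeta$ is bounded on $\CC_{1/2+\Re s_0}\subset\CC_1$. Since $\delta_{s_0}((D-\lambda)f)=(D(s_0)-\lambda)f(s_0)=0$, the range of $M_{D-\lambda}$ sits inside the proper closed hyperplane $\ker\delta_{s_0}$, and is therefore not dense.

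The substantive improvement over \cite{FGS23} is the case $0<\Re s_0\leq 1/2$, in which $\delta_{s_0}$ is no longer bounded on $\mathcal H^p$. My plan is a truncation argument via the Bohr lift: set $D_N=\mathcal P_N D\in\mathcal H^\infty_N$ and identify $\mathcal H^p_N$ with $H^p(\TT^N)$ through $F_N=\mathcal L(D_N)$. On the finite polydisc $\TT^N$, the evaluation $\delta_{\chi_{s_0,N}}$ at $\chi_{s_0,N}=(p_1^{-s_0},\ldots,p_N^{-s_0})\in\DD^N$ is a bounded functional on $H^p(\TT^N)$ that annihilates $(F_N-\lambda_N)H^p(\TT^N)$ for $\lambda_N=D_N(s_0)=F_N(\chi_{s_0,N})$, and Bohr-type uniform convergence of $D_N$ on compacta of $\CC_\varepsilon$ gives $\lambda_N\to\lambda$. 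Lifting $\delta_{\chi_{s_0,N}}$ back to $\mathcal H^p$ via $\mathcal P_N$, normalizing, and extracting a weak-$*$ accumulation point in $(\mathcal H^p)^*$ yields a candidate $\varphi$ vanishing on $(D-\lambda)\mathcal H^p$; note that this already terminates in one step when $D$ involves only finitely many primes, since then $M_D$ decomposes as a tensor product and the finite-dimensional polydisc argument applies directly. The main obstacle is the non-degeneracy of this limit in the genuinely infinite-variable regime: since $\chi_{s_0}\notin\ell^2$ when $\Re s_0\leq 1/2$, the norms $\|\delta_{\chi_{s_0,N}}\|_{H^p(\TT^N)^*}$ blow up as $N\to\infty$, so guaranteeing that the renormalized weak-$*$ limit is nonzero requires a careful choice of test function---for instance a sparse Dirichlet polynomial adapted to $s_0$ and supported on the first $N$ primes whose pairing with the normalized $\varphi_N$ admits a uniform positive lower bound.
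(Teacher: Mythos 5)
Your overall strategy (show $M_{D-\lambda}$ has non-dense range for every $\lambda\in D(\CC_0)$) is the right one, and the easy ingredients are fine: the resolvent bound for $\lambda\notin\overline{D(\CC_0)}$ and the case $\Re s_0>1/2$ via bounded point evaluation (although your claim that the kernel $\zeta(\cdot+\bar s_0)$ lies in $\mathcal H^\infty$ is wrong for $1/2<\Re s_0<1$, since $\zeta$ is unbounded on $\CC_{\Re s_0}$; what is true and suffices is that $\delta_{s_0}$ is bounded on $\mathcal H^p$ for $\Re s_0>1/2$, and this case is already contained in \cite{FGS23}). The genuine gap sits exactly where the theorem improves on \cite{FGS23}, namely $0<\Re s_0\leq 1/2$. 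There you propose to normalize the functionals $\delta_{\chi_{s_0,N}}\circ\mathcal P_N$, whose norms blow up, and to extract a nonzero weak-$*$ cluster point annihilating $(D-\lambda)\mathcal H^p$. You yourself flag that the non-degeneracy of this limit is the obstacle and leave it unresolved; but that obstacle is the whole difficulty. Nothing prevents the normalized functionals from tending weak-$*$ to $0$, and exhibiting a single $g\in\mathcal H^p$ with $|(\mathcal P_N g)(s_0)|\gtrsim \|\delta_{\chi_{s_0,N}}\|_{(\mathcal H^p_N)^*}$ along a subsequence is not carried out and is unclear even for $p=2$. As written, the new case is not proved.

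The paper sidesteps the limit of functionals entirely. Arguing by contraposition, if $M_D$ has dense range then, since $\mathcal P_N(Df)=D_N\,\mathcal P_N f$, each $M_{D_N}$ has dense range on $\mathcal H^p_N$; point evaluation at every $s_0\in\CC_0$ is bounded on $\mathcal H^p_N$ (your own observation), so each $D_N$ is zero-free on all of $\CC_0$. The passage from $D_N$ to $D$ is then performed at the level of the holomorphic functions rather than of functionals: $\|D_N\|_\infty\leq\|D\|_\infty$, so by Montel's theorem a subsequence $D_{N_j}$ converges locally uniformly on each $\CC_\sigma$, $\sigma>0$; the limit coincides with $D$ on $\CC_{1/2}$ by absolute convergence there, hence on $\CC_0$; and Hurwitz's theorem yields that $D$ itself is zero-free on $\CC_0$. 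This Montel--Hurwitz step is the substitute you need for your weak-$*$ accumulation argument.
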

Here, $\sigma_c(M_d)$ denotes the continuous spectrum of $M_D$, namely the set of complex numbers $\lambda$ such that
$M_D-\lambda$ is injective and has dense but not closed range. In \cite{FGS23}, it was only shown that $\sigma_c(M_D)\subset \overline{D(\CC_0)}\backslash D(\CC_{1/2})$.
\begin{proof}
 Since $M_D-\lambda=M_{D-\lambda},$ it is sufficient to show that if $M_D$ has dense range, then $D$ does not vanish on $\CC_0$ 
 (it is easy to show that $\sigma(M_D)\subset \overline{D(\CC_0)}$, see \cite{FGS23} for details). Let $N\geq 1.$ If $M_D$
 has dense range, then $M_{D_N}:\mathcal H^p_N\to \mathcal H^p_N$ has dense range too. Assume that $D_N(s_0)=0$ for some $s_0\in\CC.$
 Since pointwise evaluation at $s_0\in\CC_0$ is continuous on $\mathcal H^p_N,$ $M_{D_N}(\mathcal H^p_N)\subset \{E\in\mathcal H^p_N:\ E(s_0)=0\}$
 cannot be dense, a contradiction.
 
 Therefore, for all $N\geq 1,$ $D_N$ do not vanish on $\CC_0$. Now, $\|D_N\|_\infty\leq \|D\|_\infty$
 and by Montel's theorem in $\mathcal H^\infty,$ upon taking a subsequence, there exists $\tilde D\in\mathcal H^\infty$
 such that $(D_{N_j})$ converges uniformly to $\tilde D$ on any half-plane $\CC_\sigma,$
 for all $\sigma>0.$ Now, since the Dirichlet series $D$ converges absolutely in $\CC_{1/2},$  $(D_{N_j}(s))$ converges
 to $D(s)$ for any $s\in\CC_{1/2}$. Hence $D=\tilde D$ on $\CC_{1/2},$ therefore on $\CC_0.$ We can now use Hurwitz theorem
 to conclude that $D$ does not vanish on $\CC_0.$
\end{proof}

\section{Multipliers on Lebesgue spaces}
\subsection{The case $p\neq+\infty$}
In this subsection we intend to prove Theorem \ref{thm:multiplierlebesgue}. 
The main new difficulty is the construction of sequences of functions tending weakly to $0$. 
 Indeed, in this general context, we can neither use Fourier analysis tools like in the proof of Theorem \ref{thm:dirichletseries} nor the existence of inner functions which helped us to construct sequences tending weakly to zero.
 This is this part of the proof which will require that $(\Omega,\mathcal A,\mu)$ is separable.
\begin{proof}[Proof of Theorem \ref{thm:multiplierlebesgue}]
  Let $\Omega=\Omega_d\cup\Omega_a$
 where $\Omega_d\cap\Omega_a=\varnothing,$ $\mu_d=\mu_{|\Omega_d}$ is diffuse and $\mu_a=\mu_{|\Omega_a}$ is purely atomic.
 Let $(A_n)$ be a disjoint sequence of atoms such that $\Omega_a=\bigcup_n A_n$. 
 
 We first show that $\|M_u\|_{e,p\to q}\leq \|u_{|\Omega_d}\|_r.$ For $N\in\mathbb N,$ let us define
 $$u_N=\sum_{n\leq N}a_n\mathbf 1_{A_n}$$
 where $u=a_n$ a.e. on $A_n$ and $K_N=M_{u_N}f.$ Since $f$ is a.s. constant on each $A_n$, $K_N$ is a finite rank operator.
 Hence, it is compact. Now, for any $f\in L^p(\mu),$
 \begin{align*}
  \|M_u f-M_{u_N}f\|_q^q&=\int_{\Omega_d}|uf|^q d\mu+\int_{\bigcup_{n>N}A_n} |uf|^q d\mu\\
  &\leq \|u_{|\Omega_d}\|_r^q\|f\|_p^q+\left(\int_{\bigcup_{n>N}A_n}|u|^r d\mu\right)^q\|f\|_p^q
 \end{align*}
 where we have used Hölder's inequality with $\frac 1p+\frac 1r=\frac 1q.$ Hence, $\liminf_{N}\|M_u-M_{u_N}\|_{p\to q}\leq \|u_{|\Omega_d}\|_r$
 which yields the first inequality.
 
 Conversely, since $(\Omega,\mathcal A,\mu)$ is separable, there exists a sequence $(B_n)$ of subsets of $\Omega_d$ and belonging to $\mathcal A$
 such that, for any $B\in \mathcal A,$ for any $\veps>0,$ one may find $n\geq 1$ such that $\mu(B\Delta B_n)<\veps$. 
 We first construct a sequence $(g_n)$ in $L^p(\mu)$ going weakly to $0.$ Let us fix for a while $n\geq 1$. For $I\subset \{1,\dots,n\}$, $I\neq \varnothing,$
 let us set
 $$C_I=\bigcap_{k\in I}B_k\backslash\left(\bigcup_{k\in I^c}B_k\right).$$
Then the sets $C_I$ are paiwise disjoint. Moreover, for any $k\in\{1,\dots,n\}$, $B_k=\bigcup_{k\in I}C_I.$ If $\int_{C_I}|u|^rd\mu=0$, we set 
$g_n=|u|^r$ on $C_I$. Otherwise, since $|u|^rd\mu_d$ is still a diffuse measure, we may split $C_I$ into a partition $C_I'\cup C_I''$ such that
$$\int_{C_I'}|u|^r d\mu_d=\int_{C_I''}|u|^rd\mu_d=\frac 12\int_{C_I}|u|^r d\mu_d.$$
In that case, we set 
$$g_n=\left\{ 
\begin{array}{ll}
 |u|^{r/p}&\textrm{ on }C_I'\\
 -|u|^{r/p}&\textrm{ on }C_I''
\end{array}\right.
$$
so that $\int_{C_I}g_nd\mu_d=0.$ We finally define $g_n$ on $\Omega_d\backslash \bigcup_{k=1}^n B_k$ by
$$g_n=\left\{ 
\begin{array}{ll}
 |u|^{r/p}&\textrm{ on }\Omega_d\backslash \bigcup_{k=1}^n B_k\\
 0&\textrm{ on }\Omega_a.
\end{array}\right.
$$
We can observe that for any $k\leq n,$ $\int_{B_k}g_nd\mu_d=0$. Hence, for all $k\in\mathbb N,$
$\int_{B_k}g_nd\mu_d$ goes to zero as $n$ tends to $+\infty$. Since $(\mathbf 1_{B_n})_{n\geq 1}$ spans a dense subspace of $L^{p^*}(\mu_d),$
and $g_n=0$ on $\Omega_a$, this ensures that $(g_n)$ goes weakly to $0$ in $L^p(\mu)$.
Hence, 
$$\|M_u\|_{e,p\to q}\geq \limsup_n \frac{\|M_u (g_n)\|_q}{\|g_n\|_p}.$$
Now, $\|g_n\|_p=\|u_{|\Omega_d}\|_r^{r/p}$ and 
$$\|M_u g_n\|_q=\left(\int_{\Omega_d} |u|^{rq} |u|^qd\mu_d\right)^{1/q}=\|u_{|\Omega_d}\|_r^{r/q}$$
so that $\|M_u\|_{e,p\to q}\geq  \|u_{|\Omega_d}\|_r$ as guessed.
\end{proof}

\subsection{The case $p=+\infty$} The proof in this case will share some similarities with that of Proposition \ref{prop:H2}. The key tool will be the use of the conditional expectation. 
The main difference with the previous subsection is that we now work in the target space.
\begin{proof}[Proof of Theorem \ref{thm:multiplierlebesgueinfty}]
The proof of the upper bound is completely similar to that of Theorem \ref{thm:multiplierlebesgue}. Details are left to the reader.
Regarding the lower bound, we may and shall assume that $\Omega=\Omega_d.$ Indeed, if $P$ is the canonical projection $L^q(\Omega,\mu)\to L^q(\Omega_d,\mu_d)$
and $K:L^\infty(\Omega)\to L^q(\Omega)$ is compact,
then $\|M_u-K\|_{\infty\to q}\geq \|M_{u|\Omega_d}-PK\|_{\infty\to q}.$
In the same vein we may and shall assume that $(\Omega,\mathcal A,\mu)$ is a finite measure space. Indeed, writing $\Omega=\bigcup_n\Omega_n$ where $\Omega_n\subset\Omega_{n+1}$
and $\mu(\Omega_n)<+\infty$ for any $n,$ a similar argument shows that
$\|M_u\|_{e,\infty\to q}\geq \|M_{u|\Omega_n}\|_{e,\infty\to q}.$

Let $(B_n)$ be a sequence in $\mathcal A$ such that, for any $B\in\mathcal A,$ for any $\veps>0,$
there exists $n\geq 1$ with $\mu(B\Delta B_n)<\veps.$ Let $\mathcal A_n$ be the $\sigma$-algebra generated by $B_1,\dots,B_n$
and for $f\in L^1(\mu)$, let $\mathcal Q_n(f)=\mathbb E(f|\mathcal A_n)$ be the conditional expectation of $f$ given $\mathcal A_n.$
Each $\mathcal Q_n$ is a contraction of $L^q(\Omega)$ and it is a compact operator. Moreover, for any $f\in L^q(\Omega),$
$\mathcal Q_n(f)$ goes to $f$: this is true if $f$ is a linear combination of step functions and we argue by density of these functions,
using $\|\mathcal Q_n\|\leq 1.$ Let $\mathcal R_N=I-\mathcal Q_n$ which satisfies $\|\mathcal R_n\|\leq 2$ and $(\mathcal R_n)$ converges to $0$ pointwise.
Therefore by Lemma \ref{lem:generalargument}, one obtains
$$\|M_u\|_{e,\infty\to q}\geq \frac 12\limsup_{n\to+\infty} \|\mathcal R_n M_u\|_{\infty\to q}.$$
Now, for $n\geq 1$, $I\subset \{1,\dots,n\}$, $I\neq \varnothing,$
 let us set
 $$C_I=\bigcap_{k\in I}B_k\backslash\left(\bigcup_{k\in I^c}B_k\right).$$
We define a function $g_n$ as follows. 
If $\int_{C_I}|u| d\mu=0$, we set 
$g_n=1$ on $C_I$. Otherwise, since $|u| d\mu$ is still a diffuse measure, we may split $C_I$ into a partition $C_I'\cup C_I''$ such that
$$\int_{C_I'}|u| d\mu=\int_{C_I''}|u| d\mu=\frac 12\int_{C_I}|u| d\mu.$$
In that case, we set 
$$g_n=\left\{ 
\begin{array}{ll}
 1&\textrm{ on }C_I'\\
 -1&\textrm{ on }C_I''.
\end{array}\right. 
$$
We finally define $g_n$ on $\Omega\backslash \bigcup_{k=1}^n B_k$ by $g_n=1$. 
This construction ensures that, for all $A\in\mathcal A_n,$ $\int_A ug_nd\mu=0.$ This yields $\mathcal Q_n M_u g_n=0.$
Now,
\begin{align*}
\|\mathcal R_n M_u\|_{\infty\to q}&\geq \|M_u g_n\|_{\infty\to q}-\|\mathcal Q_n M_u g_n\|_{\infty\to q}\\
&\geq \|M_u g_n\|_{\infty\to q}\\
&\geq \left(\int_{\Omega}|u|^q\right)^{1/q}.
\end{align*}
This finishes the proof of the lower bound $\|M_u\|_{e,\infty\to q}\geq \frac 12 \|u\|_q.$
\end{proof}
When $q=2,$ $\mathcal Q_n$ is an orthogonal projection and $\|\mathcal R_n\|\leq 1$ for all $n\geq 1.$ Therefore we obtain the following corollary:
\begin{corollary}
Let $(\Omega,\mathcal A,\mu)$ be a $\sigma$-finite separable
 measure space and let $u\in L^2(\mu)$. Then $\|M_u\|_{e,\infty\to 2}=\|u_{|\Omega_d}\|_2.$
\end{corollary}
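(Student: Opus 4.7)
The plan is to rerun the lower bound argument from the proof of Theorem \ref{thm:multiplierlebesgueinfty}, taking advantage of the special Hilbert space structure at $q=2$ to eliminate the factor $1/2$. The upper bound $\|M_u\|_{e,\infty\to 2}\leq\|u_{|\Omega_d}\|_2$ is already part of that theorem, so only the matching lower bound requires attention.

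First I would reduce, exactly as in the proof of Theorem \ref{thm:multiplierlebesgueinfty}, to the case where $\Omega=\Omega_d$ and $\mu$ is a finite diffuse measure. The reduction uses a canonical projection $L^2(\Omega,\mu)\to L^2(\Omega_d,\mu_d)$ and an exhaustion by sets of finite measure, and these reductions do not lose any constant. Next I would introduce the same sequence of compact operators $\mathcal Q_n=\mathbb E(\cdot\,|\,\mathcal A_n)$, where $\mathcal A_n$ is the finite $\sigma$-algebra generated by $B_1,\dots,B_n$ from a dense countable family in $\mathcal A$.

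The crucial improvement is the observation flagged in the remark preceding the corollary: on $L^2(\mu)$ the conditional expectation $\mathcal Q_n$ is an orthogonal projection, so $\mathcal R_n=\mathrm{Id}-\mathcal Q_n$ is also an orthogonal projection and in particular satisfies $\|\mathcal R_n\|\leq 1$. Applying Lemma \ref{lem:generalargument}(a) with $\lambda=1$ yields
$$\|M_u\|_{e,\infty\to 2}\geq \limsup_{n\to+\infty}\|\mathcal R_n M_u\|_{\infty\to 2},$$
with no loss of a factor of two (this is the only place the factor $1/2$ arose in Theorem \ref{thm:multiplierlebesgueinfty}).

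Then I would reuse verbatim the construction of the functions $g_n\in L^\infty(\mu)$ with $\|g_n\|_\infty\leq 1$ built on the partition of $\Omega$ into the atoms $C_I$ of $\mathcal A_n$: on each $C_I$ of positive $|u|\,d\mu$-mass one splits $C_I$ in halves of equal $|u|\,d\mu$-mass and sets $g_n=\pm 1$ there (using diffuseness of $|u|\,d\mu$), and elsewhere $g_n=1$. This choice forces $\int_A u g_n\,d\mu=0$ for every $A\in\mathcal A_n$, hence $\mathcal Q_n(M_u g_n)=0$, so $\mathcal R_n M_u g_n=M_u g_n$ and
$$\|\mathcal R_n M_u\|_{\infty\to 2}\geq \|M_u g_n\|_2=\left(\int_\Omega|u|^2\,d\mu\right)^{1/2}=\|u_{|\Omega_d}\|_2.$$
Combining with the upper bound gives the claimed equality. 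I do not anticipate a serious obstacle: the only substantive point is the Hilbert-space identity $\|\mathrm{Id}-\mathcal Q_n\|\leq 1$, which is precisely what makes $\lambda=1$ (instead of $\lambda=2$) admissible in Lemma \ref{lem:generalargument}(a).
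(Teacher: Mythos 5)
Your proposal is correct and follows exactly the paper's intended argument: the corollary is stated as an immediate consequence of the proof of Theorem \ref{thm:multiplierlebesgueinfty}, with the single observation that for $q=2$ the conditional expectation $\mathcal Q_n$ is an orthogonal projection, so $\|\mathcal R_n\|\leq 1$ and Lemma \ref{lem:generalargument}(a) applies with $\lambda=1$, removing the factor $\frac12$. Nothing in your write-up deviates from or adds to what the paper does.
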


\subsection{The case $1\leq p<q$} Our method also gives the essential norm of $\|M_u\|_{e,p\to q}$ when $1\leq p<q.$ 
The situation here is easier. Indeed, for any $u:\Omega\to\Omega$ measurable, 
$M_u \in \mathcal L(L^p,L^q)$ if and only if $u_{|\Omega_d}=0$ and 
$\sup_n |u(A_n)|/\mu(A_n)^{1/r}<+\infty$ where $r=pq/(p-q)$ and $u$ is a.e. equal to $u(A_n)$ on $A_n$ (see \cite{TaYo99}). 
\begin{proposition}
Let $1\leq p<q$ and set $r=pq/(p-q).$ Let $(\Omega,\mathcal A,\mu)$ be a $\sigma$-finite measure space and let $u:\Omega\to\Omega$ be measurable such that $u_{|\Omega_d}=0$ and $\sup_n |u(A_n)|/\mu(A_n)^{1/r}<+\infty.$ Then 
$$\|M_u\|_{e,p\to q}=\limsup_{n\to+\infty}\frac{|u(A_n)|}{\mu(A_n)^{1/r}}.$$
\end{proposition}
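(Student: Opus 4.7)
The plan is to establish matching upper and lower bounds by testing $M_u$ against $L^p$-normalized indicators of atoms on which $\alpha_n := |u(A_n)|/\mu(A_n)^{1/r}$ approaches $\alpha := \limsup_n \alpha_n$. First I would record the elementary fact (a strengthening of the continuity criterion from \cite{TaYo99}) that for any measurable $v:\Omega\to\CC$ vanishing on $\Omega_d$, the operator norm of $M_v:L^p\to L^q$ is exactly $\sup_n |v(A_n)|/\mu(A_n)^{1/r}$; this follows from the observation that on each atom $M_v$ is scalar multiplication by $v(A_n)$ and that putting $e_n=\mathbf 1_{A_n}/\mu(A_n)^{1/p}$ gives $\|e_n\|_p=1$ and $\|M_v e_n\|_q=|v(A_n)|/\mu(A_n)^{1/r}$.

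For the upper bound, truncate: set $u_N = u\mathbf 1_{\bigcup_{n\leq N}A_n}$, so that $M_{u_N}$ has finite rank (its range lies in $\mathrm{span}(\mathbf 1_{A_1},\dots,\mathbf 1_{A_N})$) and is therefore compact. Since $u-u_N$ is supported on the atoms $A_n$ with $n>N$, the norm formula above gives
\[
\|M_u - M_{u_N}\|_{p\to q} = \sup_{n>N}\alpha_n,
\]
and letting $N\to\infty$ yields $\|M_u\|_{e,p\to q}\leq \inf_N \sup_{n>N}\alpha_n = \alpha$.

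For the lower bound, extract a subsequence $(A_{n_k})_k$ with $\alpha_{n_k}\to\alpha$ and consider $e_k = \mathbf 1_{A_{n_k}}/\mu(A_{n_k})^{1/p}$, so that $\|e_k\|_p=1$ and $\|M_u e_k\|_q=\alpha_{n_k}$. Given any compact $K:L^p(\mu)\to L^q(\mu)$, the bounded sequence $(Ke_k)$ has a subsequence (still denoted $(e_k)$) converging in $L^q$ to some $h$. The atoms $A_{n_k}$ being pairwise disjoint and $h\in L^q(\mu)$, we have $\sum_k \int_{A_{n_k}}|h|^q d\mu\leq \|h\|_q^q<+\infty$, which forces $\|\mathbf 1_{A_{n_k}}h\|_q\to 0$. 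Since $M_u e_k$ is supported on $A_{n_k}$ and multiplication by $\mathbf 1_{A_{n_k}}$ is a contraction on $L^q$,
\[
\|(M_u-K)e_k\|_q\geq \|\mathbf 1_{A_{n_k}}(M_u-K)e_k\|_q\geq \|M_u e_k\|_q - \|\mathbf 1_{A_{n_k}}Ke_k\|_q,
\]
and the subtracted term is bounded by $\|\mathbf 1_{A_{n_k}}h\|_q+\|Ke_k-h\|_q\to 0$. Letting $k\to\infty$ yields $\|M_u-K\|_{p\to q}\geq\alpha$, and taking the infimum over compact $K$ closes the argument.

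The main obstacle, compared with the $p>q$ case of Theorem \ref{thm:multiplierlebesgue}, is that the weakly-null sequence construction used there is unavailable: for $p<q$ the inclusion $\ell^p\hookrightarrow \ell^q$ on the atomic part keeps any $L^p$-normalized atomic indicator bounded below in $L^q$, so such sequences cannot be exploited via weak convergence against compact operators. The disjoint-supports argument above is the natural substitute; it uses only the $L^q$-integrability of the limit $h$ along the disjoint family $(A_{n_k})$ and requires no separability of $(\Omega,\mathcal A,\mu)$, which is why that hypothesis is absent from the statement.
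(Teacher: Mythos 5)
Your proof is correct, and while your upper bound coincides with the paper's (truncation to the first $N$ atoms gives a finite-rank operator, and the tail norm $\sup_{n>N}\alpha_n$, with $\alpha_n=|u(A_n)|/\mu(A_n)^{1/r}$, is exactly what the paper extracts from $\|M_u\mathcal R_N^p\|$ via \cite[Theorem 1.4]{TaYo99} --- your verification of the norm formula through the contractive inclusion $\ell^p\hookrightarrow\ell^q$ applied to the atomic coefficients is precisely the content of that theorem), your lower bound takes a genuinely different route. The paper applies Lemma~\ref{lem:generalargument}(a) to the operators $\mathcal R_n^q:f\mapsto f-\sum_{k\leq n}\mathbf 1_{A_k}f$ acting on the \emph{target} space and computes $\|\mathcal R_n^q M_u\|=\sup_{k>n}\alpha_k$ exactly, whereas you run a gliding-hump argument: normalized indicators $e_k$ along a subsequence realizing the $\limsup$, compactness of $K$ to make $(Ke_k)$ convergent, and disjointness of the supports $A_{n_k}$ to force $\|\mathbf 1_{A_{n_k}}Ke_k\|_q\to 0$. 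Your version needs only the single evaluation $\|M_ue_k\|_q=\alpha_{n_k}$ rather than the exact norm of the tail operator, and it quietly sidesteps a point the paper glosses over: when $\mu(\Omega_d)>0$ the sequence $(\mathcal R_n^q)$ converges pointwise to $f\mapsto f\mathbf 1_{\Omega_d}$ rather than to $0$ on $L^q(\mu)$, so Lemma~\ref{lem:generalargument}(a) is only applicable after first composing with the norm-one projection onto $L^q(\Omega_a)$ (harmless since $u$ vanishes on $\Omega_d$, but unstated). The paper's route, in exchange, handles both bounds uniformly through the single identity $\mathcal R_n^qM_u=M_u\mathcal R_n^p$. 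Two cosmetic points only: you should dispose of the degenerate case of finitely many atoms (then $M_u$ has finite rank and both sides of the formula vanish), and the subsequence you extract is of $(Ke_k)$, not of $(e_k)$ as written.
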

\begin{proof}
Without loss of generality, we can assume that the sequence $(A_n)$ is infinite and $\mu(A_n)\neq 0$ for all $n$ (otherwise, $M_u$ is always compact since it has finite rank). For $s\in\{p,q\},$ denote $\mathcal Q_n^s f=\sum_{k=1}^n \mathbf 1_{A_k}f\in L^s(\mu)$ and $\mathcal R_n^s =\mathrm{Id}_{L^s}-\mathcal Q_n.$ Then $\|\mathcal R_n^s\|=1$ and by Lemma \ref{lem:generalargument},
$$\limsup_n \|\mathcal R_n^q M_u\|\leq \|M_u\|_{e,p\to q}\leq \liminf_n \|M_u \mathcal R_n^p\|.$$
Now,  for any $f\in L^p,$ $\mathcal R_n^q M_u f= M_u \mathcal R_n^p f=\sum_{k=n+1}^{+\infty} u(A_k) \mathbf 1_{A_k}f=: T_n f$. We conclude by \cite[Theorem 1.4]{TaYo99} that 
$$\|T_n\|_{p\to q}=\sup_{k\geq n}\frac{|u(A_n)|}{\mu(A_n)^{1/r}}.$$ 
\end{proof}

\subsection{Weighted composition operators}
In the spirit of \cite{LoLoh20} or of Section \ref{sec:wcoh2} of the present paper, 
our method of proof has applications to weighted composition operators. Let $(\Omega_1,\mathcal A,\mu)$
and $(\Omega_2,\mathcal B,\nu)$ be two $\sigma$-finite measure spaces, let $u:\Omega_2\to \mathbb C$ be measurable
and let $\varphi:\Omega_1\to\Omega_2$ be measurable and nonsingular. The weighted composition operator $uC_\varphi$ is 
defined for $f\in L^p(\mu)$ by
$$uC_\varphi f(x)=u(x)\cdot f\circ\varphi(x),\ x\in\Omega_2.$$
For $q\geq 1,$ the measure $\mu_q$ defined for any $A\in\mathcal A$ by
$$\mu_q(A)=\int_{\varphi^{-1}(A)}|u|^q d\nu$$
is absolutely continuous with respect to $\mu.$ Its Radon-Nikodym derivative will be denoted by $d\mu_q/d\mu.$ It satisfies the important property
$$\|uC_\varphi f\|_{L^q(\nu)}=\|M_{F_{q,u,\varphi}}f\|_{L^q(\mu)}$$
where $F_{q,u,\varphi}=(d\mu_q/d\mu)^{1/q}.$ Then Theorem \ref{thm:multiplierlebesgue} and its proofs yields the following statement.
\begin{theorem}
 Let $(\Omega_1,\mathcal A,\mu)$
and $(\Omega_2,\mathcal B,\nu)$ be two $\sigma$-finite measure spaces with $\Omega_1$ separable, let $u:\Omega_2\to \mathbb C$ be measurable
and let $\varphi:\Omega_1\to\Omega_2$ be measurable and nonsingular. Let finally $p>q\geq 1.$ Then $\|uC_\varphi\|_{e,p\to q}=\|F_{q,u,\varphi|\Omega_{1,d}}\|_r$
where $r=pq/(p-q)$ and $\Omega_{1,d}$ is the diffuse part of $\Omega_1.$
\end{theorem}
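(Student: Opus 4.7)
The plan is to reduce the computation of $\|uC_\varphi\|_{e,p\to q}$ to that of $\|M_F\|_{e,p\to q}$, where $F := F_{q,u,\varphi}$, and then invoke Theorem \ref{thm:multiplierlebesgue}. The crucial ingredient is the identity recorded just before the statement,
$$\|uC_\varphi f\|_{L^q(\nu)} = \|M_F f\|_{L^q(\mu)}, \quad f \in L^p(\mu),$$
which follows from the change-of-variables formula and the definition of $F$ as the $q$-th root of $d\mu_q/d\mu$.

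To establish the upper bound, I would factor $uC_\varphi = V \circ M_F$, where $V : L^q(\mu) \to L^q(\nu)$ is defined by
$$Vg = u \cdot \bigl((g/F)\mathbf 1_{\{F > 0\}}\bigr) \circ \varphi,$$
with the convention $0/0 = 0$. The same change of variables applied to $g/F$ shows $\|Vg\|_{L^q(\nu)} = \|g \mathbf 1_{\{F > 0\}}\|_{L^q(\mu)}$, so $V$ is a contraction. The equality $VM_F = uC_\varphi$ holds $\nu$-almost everywhere: the only possible discrepancy lives on $\varphi^{-1}(\{F = 0\})$, but there $u$ vanishes $\nu$-a.e., because
$$0 = \mu_q(\{F=0\}) = \int_{\varphi^{-1}(\{F=0\})} |u|^q\, d\nu.$$
For any compact $K : L^p(\mu) \to L^q(\mu)$, the operator $VK : L^p(\mu) \to L^q(\nu)$ is compact and $\|uC_\varphi - VK\|_{p \to q} \leq \|M_F - K\|_{p \to q}$. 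Taking the infimum over $K$ and applying Theorem \ref{thm:multiplierlebesgue} yields $\|uC_\varphi\|_{e,p\to q} \leq \|M_F\|_{e,p\to q} = \|F_{|\Omega_{1,d}}\|_r$.

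For the lower bound, I would recycle the weakly null sequence $(g_n) \subset L^p(\mu)$ built in the proof of Theorem \ref{thm:multiplierlebesgue}: it is supported in $\Omega_{1,d}$, bounded, and satisfies $\|M_F g_n\|_{L^q(\mu)}/\|g_n\|_{L^p(\mu)} \to \|F_{|\Omega_{1,d}}\|_r$. For any compact $K : L^p(\mu) \to L^q(\nu)$, weak nullity forces $\|K g_n\|_{L^q(\nu)} \to 0$, and the isometric identity gives
$$\|uC_\varphi - K\|_{p \to q}\,\|g_n\|_{L^p(\mu)} \geq \|uC_\varphi g_n\|_{L^q(\nu)} - \|Kg_n\|_{L^q(\nu)} = \|M_F g_n\|_{L^q(\mu)} - o(1).$$
Dividing by $\|g_n\|_{L^p(\mu)}$, taking $\limsup_n$, and then the infimum over $K$ produces the matching lower bound. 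I expect no serious obstacle beyond the verification that $VM_F = uC_\varphi$ $\nu$-a.e.\ (where the forced vanishing of $u$ on $\varphi^{-1}(\{F=0\})$ is the only delicate point); once this reduction is in place, Theorem \ref{thm:multiplierlebesgue} and the sequence constructed in its proof (which is where the separability of $\Omega_1$ enters) supply everything else.
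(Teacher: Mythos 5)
Your proposal is correct and follows exactly the route the paper intends: the paper itself only records the isometric identity $\|uC_\varphi f\|_{L^q(\nu)}=\|M_{F_{q,u,\varphi}}f\|_{L^q(\mu)}$ and then asserts that Theorem \ref{thm:multiplierlebesgue} ``and its proofs'' give the result, which is precisely your reduction. Your explicit factorization $uC_\varphi=V\circ M_F$ (with the observation that $u$ vanishes $\nu$-a.e.\ on $\varphi^{-1}(\{F=0\})$) for the upper bound, and the reuse of the weakly null sequence $(g_n)$ built from $F$ on the diffuse part for the lower bound, are a sound and complete implementation of that sketch.
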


\providecommand{\bysame}{\leavevmode\hbox to3em{\hrulefill}\thinspace}
\providecommand{\MR}{\relax\ifhmode\unskip\space\fi MR }
\providecommand{\MRhref}[2]{%
  \href{http://www.ams.org/mathscinet-getitem?mr=#1}{#2}
}
\providecommand{\href}[2]{#2}

\end{document}